 \theoremstyle{plain}
\newtheorem{theo}{Theorem}[subsection]
\newtheorem{pr}[theo]{Proposition}
 \newtheorem{lem}[theo]{Lemma}
\theoremstyle{remark}
\newtheorem{rema}[theo]{Remark}
\theoremstyle{definition}
\newtheorem{defi}[theo]{Definition}
\newtheorem*{notat}{Notation}
 \newcommand\lan{\langle}
\newcommand\ra{\rangle}
\newcommand\ob{^{-1}}
\newcommand\dmge{DM^{eff}_{gm}{}}
\newcommand\dmgm{DM_{gm}}
\newcommand\dom{D^b(\mathcal{M}_1)}
\newcommand\hsing{H^{sing}}
\newcommand\mg{\mathcal{M}}
\newcommand\mgo{\mathcal{M}_1}
\newcommand\obj{Obj}
\newcommand\id{id}
\newcommand\cu{\underline{C}}
\newcommand\au{\underline{A}}
\newcommand\bu{\underline{B}}
\newcommand\aui{\underline{A}_i}
\newcommand\hrt{{\underline{Ht}}}
\newcommand\hw{{\underline{Hw}}}
\newcommand\z{{\mathbb{Z}}}
\newcommand\ns{\{0\}}
\DeclareMathOperator\lalb{\operatorname{LAlb}}
\DeclareMathOperator\alb{\operatorname{Alb}}
\DeclareMathOperator\rpic{\operatorname{RPic}}
\newcommand\chow{Chow}
\newcommand\chowe{Chow^{eff}}
\newcommand\spe{\operatorname{Spec}\,}
\newcommand\mhmx{MHM(X)}
\newcommand\dbmhmx{D^bMHM(X)}
 \DeclareMathOperator\ke{\operatorname{Ker}}
 \DeclareMathOperator\cok{\operatorname{Coker}}
\DeclareMathOperator\imm{\operatorname{Im}}
\newcommand\wchow{{w_{Chow}}}
\newcommand\dbhp{D^b_{{\tilde H}_p}}
\newcommand\dmrp{D_{\mathcal{MR^P}}}
\begin{document}

%\special{papersize=21cm,29.7cm}

 \title{Weights and $t$-structures: in general triangulated categories, for $1$-motives,  mixed motives,  and
for mixed Hodge complexes and modules}
 \author{Mikhail V. Bondarko
\thanks{ %%!!!!
 The work is supported by RFBR
(grants no. 10-01-00287a and 11-01-00588a), by a Saint-Petersburg State University research grant no. 6.38.75.2011, and by the Federal Targeted Programme "Scientific and Scientific-Pedagogical Personnel of the Innovative Russia in 2009-2013" (Contract No. 2010-1.1-111-128-033); by Landau Network-Centro Volta and the Cariplo Foundation, and the University of Milano.}}
\maketitle
 
\begin{abstract}
We define and study {\it transversal} weight and $t$-structures (for triangulated categories); if a weight structure is transversal to a $t$-one, then it defines certain 'weights' for its heart. Our
results axiomatize and describe in detail  the relations between
the {\it Chow weight structure} $\wchow$ for Voevodsky's motives  (introduced in a preceding paper),  the
(conjectural) motivic $t$-structure, and
the conjectural weight filtration for them. This picture becomes
non-conjectural when restricted to the derived categories of
Deligne's $1$-motives (over a smooth base) and of Artin-Tate motives over number fields;  weight structures transversal to the canonical $t$-structures also exist for the Beilinson's $\dbhp$ (the derived category of graded
polarizable mixed Hodge complexes) and for the derived category of (Saito's) mixed Hodge modules. 

We also study weight filtrations for the heart of $t$ and (the degeneration of) weight spectral sequences. The corresponding relation between $t$ and $w$ is strictly weaker than transversality; yet it is easier to check, and we still obtain a certain filtration for (objects of) the heart of $t$ that is strictly respected by morphisms.

In a succeeding paper we apply the  results obtained in order to reduce the existence of Beilinson's mixed motivic sheaves (over a base scheme $S$) and 'weights' for them to (certain) standard motivic conjectures over a universal domain $K$.

% We study  conditions that ensure that $w$ induces a certain ('weight') filtration on the heart of $t$.

\end{abstract}

\tableofcontents

\section*{Introduction}

In this paper we study when a {\it weight structure} (as defined in \cite{bws}; in \cite{konk} weight structures were introduced independently under the name of co-$t$-structures) yields a certain 'weight filtration' for the heart of a $t$-structure in a triangulated category. We prove several formal results, and describe certain motivic and Hodge-theoretic examples of this situation.

The main reason to write this paper was to understand
the relation of the 'weight structure approach' to weights for motives (as introduced in \cite{bws}) with the 'classical' one.
Recall that  the triangulated category
$\dmgm$ of (geometric) Voevodsky's motives (over a perfect field $k$; all the motives that
we will consider in this paper will have  rational coefficients)
is widely believed to possess a certain motivic $t$-structure $t_{MM}$. Its heart
should be the category $MM$ of mixed motives, that should possess a
{\it weight filtration} whose factors yield certain semi-simple
abelian subcategories $MM_i\subset MM$ of pure motives of weight
$i$; the objects of $MM_i$ should be shifts of certain Chow
motives (note that in \cite{1} an embedding $\chow\to\dmgm$ was
constructed) by $[i]$. Since the existence of $t_{MM}$ is very
far from being known, people tried to find a candidate for the
weight filtration for $\dmgm$; this was to be a filtration by
triangulated subcategories that would restrict to the weight
filtration for $MM$. This activity was not really successful (in the  general case); this is no surprise since (for example) the weight
filtration for the motif of a smooth projective variety should
correspond to its {\it Chow-Kunneth decomposition}.

An alternative method for defining (certain) weights in $\dmgm$
was proposed and successfully implemented in \cite{bws}. To this
end {\it weight structures} were defined. This notion is a natural
important counterpart of $t$-structures; somewhat similarly to a
$t$-structure, a weight structure $w$ for a triangulated $\cu$ is
defined via certain $\cu^{w\le 0},\cu^{w\ge 0}\subset \obj\cu$.
The Chow weight structure  $\wchow$ (defined in \S6 of ibid.) % for motives over a field)
certainly does not yield a weight filtration for $\dmgm$ in the
sense described above (since $\dmgm^{\wchow\le 0}$ and $\dmgm^{\wchow\ge 0}$ are not stable with respect to shifts). Yet it allows us to define certain
(Chow)-weight filtrations and (Chow)-weight spectral sequences for
any cohomology of motives; for singular and \'etale cohomology
those are isomorphic to the 'classical' ones (that should also
have an expression in terms of the weight filtration for $\dmgm$), and this should
also be true for the 'mixed motivic' cohomology given by $t_{MM}$
(see Remark 2.4.3 of ibid. and \S\ref{sconj} below). Also note
here: the Chow weight structure for  Voevodsky's motives over a base scheme $S$
(introduced in \cite{hebpo} and \cite{brelmot}), is closely
related with the weights for  mixed complexes of sheaves introduced in
\S5.1.8 of \cite{bbd} (see \S\S3.4--3.6 of \cite{brelmot} for more detail), and with weights of mixed Hodge complexes (see \S\ref{shodge} below; we prove a very precise statement of this sort in the case when $S$ is the spectrum of a field $k\subset \mathbb{C}$). 

In the current paper we axiomatize and describe in
detail  the (conjectural) relations between $\wchow$, $t_{MM}$,
and the weight filtration for $\dmgm$ (we consider the latter in  Remark \ref{rwfilc} below).  To this end we introduce
the notion of {\it transversal} weight and $t$-structures. It is
no surprise that this notion has several non-conjectural (and
important) examples; this includes  the derived categories of Deligne's
$1$-motives (over a smooth base) and of Artin-Tate motives over number fields,  the derived category of (Saito's) mixed Hodge modules, and the Beilinson's  derived
category of graded polarizable mixed Hodge complexes. Certain results
of \cite{bvk} were very useful for studying these examples. %; cf. also \S3 of \cite{wildat}.

We prove several equivalent conditions for existence of
transversal weight and $t$-structures for a triangulated $\cu$.
One of them is the existence of a {\it strongly semi-orthogonal generating}
system of semi-simple abelian subcategories $\aui\subset \cu$ ($\aui$ are the factors of the 'weight filtration' of the heart of $t$). We
prove that any object of $\hw$ (the heart of $w$) splits into a sum of objects of $\aui[-i]$
(this should be a generalization of the Chow-Kunneth decomposition of motives of smooth projective varieties). This is a strong restriction on $w$; it demonstrates that the notion of transversal structures is
 quite distinct from the notion of {\it adjacent} weight and $t$-structures (introduced
in \S4.4 of \cite{bws}).

We 
also recall the notion of {\it weight spectral sequences} $T(H,-)$ for a (co)homological functor $H$, and prove: %that 
if those degenerate at $E_2$, then they induce a filtration for %cohomology 
$H(-)$ that is strictly respected by morphisms (coming from $\cu$). In particular, if   
(all) $T(H,-)$ degenerate for $H$ being the (zeroth) homology functor $\cu\to\hrt$, then we obtain a certain weight filtration for $\hrt$.
The degeneration of $T$ in this case is strictly weaker than the transversality (of $w$ and $t$) and the author does not have a complete understanding of this condition. %yet it is sufficient for $w$ to induce a {\it weight filtration} on $\hrt$. The  advantage of this condition 
Its advantage is that it could be 'checked at $t$-exact conservative realizations' of $\cu$. Conjecturally, this statement can be applied to (relative) Voevodsky's  motives and their \'etale realization.

In a subsequent paper \cite{bmm} the notion of transversal structures allows us to apply a certain 'gluing' argument, that reduces the existence of a 'nice' motivic $t$-structure for Beilinson motives over   (equi-characteristic) 'reasonable' base schemes   (cf. \cite{brelmot} and \cite{degcis}) to the case of motives over algebraically closed fields.
The argument mentioned also relies on the degeneration of (Chow)-weight spectral sequences for 'perverse \'etale homology' (which conjecturally implies the corresponding degeneration for $H^0_{t_{MM}}$).

The results of the current paper are somewhat formal (since it does not seem that we obtain much new information on the examples described in the paper); they also do not seem to be really unexpected. Yet this paper is definitely the first one where the weights for the heart of a $t$-structure were related with weight structures and weight spectral sequences; this makes it quite important (at least) for the study of various triangulated categories of motives (since one has certain Chow weight structures for them) and of their realizations. Besides, it  seems that our main setting (of transversal weight and $t$-structures) has not been axiomatized previously.

{\bf Caution on signs of weights.} When the author defined weight structures (in \cite{bws}), he chose $\cu^{w\le 0}$ to be stable with respect to $[1]$ (similarly to the usual convention for $t$-structures); in particular, this meant that for $\cu=K(B)$ (the homotopy category of cohomological complexes) and for the 'stupid' weight structure for it (see \S1.1 of ibid.),   a complex $C$ whose only non-zero term is the fifth one was 'of weight $5$'. Whereas this convention seems to be quite natural, for weights of mixed Hodge complexes, mixed Hodge modules (see Proposition 2.6 below), and mixed complexes of sheaves (see Proposition 3.6.1 of \cite{brelmot}) 'classically' exactly the opposite convention was used (by Beilinson, Saito and others; so, if we extend this convention to complexes, then our $C$ would have weight $-5$). For this reason, in the current paper we use the 'reverse' convention for signs of weights, that is compatible with the  'classical weights' (this convention for the Chow weight structure for motives was already used in \cite{hebpo},  in \cite{wildat}, and in \cite{bzpc}); so the signs of weights used below will be opposite to those in \cite{bws}, \cite{bger}, and in \cite{bzp}, as well as to those in the current versions of \cite{bsurv},   \cite{brelmot}, and \cite{bmm}. 
In particular, for the current convention we have:  $\cu^{w\le i}=\cu^{w\le 0}[i]$, $\cu^{w\ge i}=\cu^{w\ge 0}[i]$ (and so $\cu^{w=i}=\cu^{w=0}[i]$), whereas $\cu^{w\le 0}\perp \cu^{w\ge 1}$.

Now we list the contents of the paper.

In the first section we prove the equivalence of nine %??!
 definitions of transversal weight and $t$-structures. This yields several relations between $t$-structures, weight structures, weight filtrations, and {\it semi-orthogonal} generators $\aui$ in this situation.  We don't recall the (general) theory of weight structures %(and its definitions)
 in this paper; so a reader that is not acquainted with it should probably  consult \cite{bws}  or  \cite{bsurv}  (yet paying attention to the Caution above!). In the end of the section we also calculate the $K_0$-group of a triangulated category endowed with transversal weight and $t$-structures.

We start the  second section by noting that the results of \cite{bvk} yield a general criterion for existence of a weight structure that is transversal to the canonical
$t$-structure for $D^b(\au)$ if $\au$ admits a  'weight filtration' %cf. ??!!!
(with semi-simple 'factors'). We use this result for the construction of the main examples (of transversal weight and $t$-structures) in this paper; yet cf. Remark \ref{rfun}(1).  
Applying some more results of \cite{bvk}, we deduce the existence of a weight structure that is transversal to the canonical 
 (i.e. 'motivic') $t$-structure for the derived category $\dom$ of Deligne's $1$-motives. Then we prove that  the functors $\lalb,\rpic\to\dmge
 \subset \dmgm$ (considered in \cite{bvk}) 
 respect the weight structure constructed (this should also be true %for adjoint functors also??!!!
for the corresponding motivic $t$-structures if one %??!!
 could define $t_{MM}$ for $\dmge$). Note that this fact is not automatic, since
 the usual definition of weights for $1$-motives does not mention Chow motives. Next we verify that on the derived category of mixed Hodge modules over a complex variety $X$, and on the 
Beilinson's derived category $\dbhp$ of graded polarizable mixed Hodge complexes (over a base field $k\subset \mathbb{C}$) there exist  weight structures
transversal to the corresponding canonical $t$-structures; the singular realization functor $\dmgm(k)\to \dbhp(k)$ respects the corresponding weight structures.  Lastly, we describe the conjectural relations between various 'structures' for $\dmgm$.  %These weight structures correspond to 'classical' weights for mixed Hodge complexes??!!

In \S3 we recall weight filtrations and weight spectral sequences for (co)homology (that correspond to weight structures). We 
relate the degeneration of weight spectral sequences  (at $E_2$) with the 'exactness' of the corresponding weight filtration. In the case when the homology is the %one coming from $t$-truncations
$t$-one, we obtain a certain weight filtration for $\hrt$ (though this degeneration condition is strictly weaker than the transversality of $t$ and $w$).

 %This observation will also be applied for the study of relative motives in the subsequent paper (that was mentioned above).  

The author is deeply grateful to prof. L. Barbieri-Viale who
inspired him to write this paper, and to prof. L. Positselski for interesting comments.

\begin{notat}

$\cu$ below will always denote some triangulated category.
$t$ will %always
 denote a bounded $t$-structure for $\cu$,
and $w$ will be a bounded weight structure for it (the theory
of weight structures was studied in detail in \cite{bws}; note the Caution above, and see also \cite{konk} and \cite{bsurv}). %; we will .

For $X\in \obj \cu$, $i\in \z$, we will consider the following distinguished triangles:
\begin{equation}\label{etd}
\tau_{\le i}X\to X\to \tau_{\ge i+1}X\end{equation}
 and
\begin{equation}\label{ewd} w_{\le i}X\to X\to w_{\ge i+1}X \end{equation} that come from the $t$-decompositions of $X[i]$ shifted by $[-i]$ (resp. from a weight decomposition of $X[-i]$ shifted by $[i]$), i.e. $ \tau_{\le i}X\in \cu^{t\le i}$, $ \tau_{\ge i+1}X\in \cu^{t\ge i+1}$,
 $ w_{\le i}X\in \cu^{w\le i}$, $ w_{\ge i+1}X\in \cu^{w\ge i+1}$; cf. Remark 1.2.2 of \cite{bws}).

 $X^{\tau=i}\in\cu^{t=0}$ will denote the $i$-th cohomology of $X$ with respect to $t$
  i.e. the cone of the corresponding morphism $\tau_{\le -1}(X[i])\to \tau_{\le 0}(X[i])$; $\tau_{=i}X=X^{\tau=i}[-i]$;  $\hrt$ will denote the heart of $t$. %; $\cu^{t=0}=\obj \hrt$.

$D\subset \obj \cu$ will be
called {\it extension-stable}
    if for any distinguished triangle $A\to B\to C$
in $\cu$ we have: $A,C\in D\implies
B\in D$. Note that $\cu^{t\le i}$, $\cu^{t\ge i}$, $\cu^{t=i}=\cu^{t\ge i}\cap \cu^{t\le i}$, %$\cu^{w=0}$,
 $ \cu^{w\ge i}$, $\cu^{w\le i}$, $\cu^{[i,j]}=\cu^{w\ge i}\cap \cu^{w\le j}$, and $\cu^{w=i}=\cu^{[i,i]}$ 
 are extension-stable for any $t,w$ and any $i\le j
 \in \z$.

For a subcategory $H\subset \cu$ we will call   the smallest %strict
extension-stable subcategory of $\cu$ containing $H$ the {\it envelope} of $H$ (in $\cu$). %$H\cup \ns$.

For $D,E\subset \obj \cu$ we will write $D\perp E$ if $\cu(X, Y)=\ns$
 for all $X\in D,\ Y\in E$.

For $B\subset \cu$ we will call  the subcategory of $\cu$ whose objects are  all
retracts of  objects of $B$ (in $\cu$) the {\it Karoubi-closure} of $B$
in $\cu$.

  For a class of
objects $C_i\in\obj\cu$, $i\in I$, we will denote by $\lan C_i\ra$
the smallest strictly full triangulated subcategory containing all $C_i$; for
$D\subset \cu$ we will write $\lan D\ra$ instead of $\lan C:\ C\in\obj
D\ra$.

$\au$ will always be an abelian category;  $\au_i$ (for $i$ running through all integral numbers) will always be additive, and will often be abelian semi-simple.

$k$ will be our perfect base field (sometimes it will be contained in or equal to the field of complex numbers).

\end{notat}

 \section{Transversal weight and \texorpdfstring{$t$}{t}-structures: the general case}

In \S\ref{saux} we prove  several auxiliary statements; none of them are really new. We introduce our main formalism (of transversal weight and $t$-structures) and study it in \S\ref{sdtrans}.

\subsection{Auxiliary statements}\label{saux} %; weight filtrations for triangulated categories}

We will need the following easy homological algebra statements.

\begin{lem}\label{lbas}

1. Let  $T:X\stackrel{a}{\to} A\stackrel{f}{\to} B\stackrel{b}{\to} X[1]$
and $T':X'\stackrel{a'}{\to} A'\stackrel{f'}{\to}
 B'\stackrel{b'}{\to} X'[1]$ be
distinguished triangles.

  Let $B\perp A'[1]$.
 Then for any morphism $g:X\to X'$ there exist $h:A\to A'$ and
$i:B\to B'$ completing $g$ to a morphism of triangles $T\to T'$.

Moreover, if $B\perp A'$, then  $g$ and $h$ are unique.

2. In particular, for any $i\in\z$, $X,Y\in\obj \cu$, any  $g\in \cu(X,X')$ could be completed to a morphism of distinguished triangles
\begin{equation}\label{efwd}
 \begin{CD} w_{\le i} X@>{}>>
X@>{}>> w_{\ge i+1}X\\
@VV{}V@VV{g}V@ VV{}V \\
w_{\le i} Y@>{}>>
Y@>{}>> w_{\ge i+1}Y \end{CD}
\end{equation}

3. If $D\perp E$ ($D,E\subset \obj\cu$), then the same is true for their envelopes.

4. Let $D,E\subset \obj\cu$ be extension-stable, $D\perp (E\cup E[1])$.
For some $F\subset \obj \cu$ suppose that for any $X\in F$ there exists a distinguished triangle $Y\to X\to Z$ with $Y\in D,\ Z\in E$. Then such a distinguished triangle also exists for any $X$ belonging to the envelope of $F$ (in $\cu$).

5. For any $i\le j\in \z$  we have: $\cu^{[i,j]}$ is the envelope of
 $\cup_{i\le l\le j}\cu^{w=l}$ in $\cu$.

6. Let $B$ be an additive category; let $B_i\subset B$, $i\in \z$, be  its full additive subcategories such that $B_i\perp B_j$ for $j>i$, and $\obj B=\bigoplus_{i\in \z} \obj B_i$. Suppose that all $B_i$ are idempotent complete (i.e. that for any $X\in\obj B_i$
and any idempotent
$s\in B_i(X,X)$ there exists a decomposition $X=Y\bigoplus Z$ such that
$s=j\circ p$, where $j$ is the inclusion $Y\to X(\cong Y\bigoplus Z)$, $p$ is the projection $X(\cong Y\bigoplus Z)\to Y$).
Then $B$
is idempotent complete also. 

\end{lem}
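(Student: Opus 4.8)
The plan is to fix an object $X\in\obj B$ together with an idempotent $e\in B(X,X)$ (so $e^2=e$), and to produce an explicit splitting of $e$. Writing $X=\bigoplus_i X_i$ with $X_i\in\obj B_i$ — a \emph{finite} direct sum, since $\obj B=\bigoplus_{i\in\z}\obj B_i$ — I would first record the key structural consequence of the orthogonality hypothesis: for $i>j$ one has $B_j\perp B_i$, hence $B(X_j,X_i)=\ns$. Thus, writing $e$ as a matrix $(e_{ij})$ of components $e_{ij}\in B(X_j,X_i)$, every entry with $i>j$ vanishes; that is, $e$ is ``upper triangular'' for the index order. Since only finitely many $X_i$ are nonzero, this triangularity will be the source of a nilpotent ideal below.

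First I would treat the diagonal. Multiplying out $e^2=e$ and comparing the $(i,i)$-components, the only product $e_{ik}e_{ki}$ that can survive the triangularity constraint on \emph{both} factors is the one with $k=i$ (one needs simultaneously $i\le k$ and $k\le i$); hence $e_{ii}^2=e_{ii}$, so each $e_{ii}\in B_i(X_i,X_i)$ is idempotent. Idempotent-completeness of $B_i$ then gives a decomposition $X_i=Y_i\oplus Z_i$ with $e_{ii}=j_i\circ p_i$ and $p_i\circ j_i=\id_{Y_i}$. Assembling these over the finitely many nonzero $i$ produces $Y=\bigoplus_i Y_i$ together with $j=\bigoplus_i j_i\colon Y\to X$ and $p=\bigoplus_i p_i\colon X\to Y$, which split the ``diagonal'' endomorphism $d=\bigoplus_i e_{ii}$ of $X$: indeed $j\circ p=d$ and $p\circ j=\id_Y$. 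One checks immediately that $d$ is itself idempotent.

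The remaining, and main, step is to compare $e$ with $d$. Let $R=B(X,X)$ and let $I\subset R$ be the set of strictly upper-triangular endomorphisms (nonzero components $e_{ij}$ only for $i<j$). By the triangularity of all of $R$, the subset $I$ is a two-sided ideal, and it is \emph{nilpotent} because the nonzero blocks range over a finite set of indices. By construction $e-d\in I$, and both $e,d$ are idempotent. I would then invoke the standard ``lifting idempotents modulo a nilpotent ideal'' device in its explicit form: set $u=de+(1-d)(1-e)$. A direct computation gives $ue=de=du$, while $u\equiv 1 \pmod I$ with $I$ nilpotent forces $u-1$ to be nilpotent, so $u$ is invertible; hence $ueu^{-1}=d$. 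Transporting the splitting of $d$ through $u$ — i.e. taking $j'=u^{-1}\circ j$ and $p'=p\circ u$ — yields $p'\circ j'=\id_Y$ and $j'\circ p'=u^{-1}du=e$, so $e$ splits and $B$ is idempotent complete.

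The bookkeeping (triangularity, idempotence of the diagonal blocks, and assembling the diagonal splitting) is routine; the one place requiring care is getting the \emph{direction} of the triangularity right from the convention $B_i\perp B_j$ for $j>i$, and then the conjugation step, whose essential inputs are the nilpotence of $I$ (guaranteed by finiteness of the decomposition) and the identity $ue=du$. I expect the conjugation/lifting argument to be the conceptual crux, with everything else formal.
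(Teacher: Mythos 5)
Your proposal addresses only part 6 of the Lemma; that is in fact the only part the paper proves in detail (parts 1--5 are dispatched by citations to earlier work and by ``easy'' remarks), so the comparison below concerns part 6, and the omission of the other parts is not a real defect.

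Your proof of part 6 is correct, but it is organized differently from the paper's. The paper argues by induction on the number of indices $i$ with $X_i\neq 0$: it peels off the top diagonal block $s_1=s_{i_2,i_2}$, sets $s_2=\bigoplus_{i_1\le i,j<i_2}s_{ij}$, splits both by the inductive hypothesis, and then shows that $s$ is conjugate to $s_1+s_2$ via the explicitly inverted unit $h=\id_X-\de\circ s_1+s_2\circ \de$, where $\de=s-s_1-s_2$ is the off-diagonal part, using the vanishing relations $s_1s_2=s_2s_1=\de s_2=s_1\de=\de^2=0$ supplied by orthogonality. You instead treat all blocks at once: orthogonality forces every endomorphism of $X$ to be upper triangular, the diagonal part $d=\bigoplus_i e_{ii}$ is an idempotent that splits blockwise by hypothesis, the strictly triangular endomorphisms form a nilpotent two-sided ideal $I$ (nilpotent because the sum is finite), and $e$ is conjugate to $d$ by the standard lifting-idempotents unit $u=de+(1-d)(1-e)$, which satisfies $ue=du$ and is invertible because $u-1\in I$. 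Both proofs are at bottom ``conjugate the given idempotent to its block-diagonal part''; yours buys a one-shot, induction-free argument resting on a standard algebraic device (idempotents are conjugate modulo a nilpotent ideal), while the paper's induction keeps everything self-contained, with an inverse written down by hand and no need for the ideal/nilpotence formalism. One cosmetic point: since the Lemma phrases idempotent completeness via a decomposition $X=Y\bigoplus Z$, you should transport the full decomposition $\bigoplus_i (Y_i\bigoplus Z_i)$ along the isomorphism $u$, not only the retraction $(j',p')$ of $Y$; this is immediate because $u$ is an isomorphism, so it is a remark rather than a gap.
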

\begin{proof}

1. This is Lemma 1.4.1 of \cite{bws}; it follows immediately from Proposition 1.1.9 of
\cite{bbd}.

2. Follows immediately from assertion 1; cf. Lemma 1.5.1 of \cite{bws}.

3. Very easy; note that for any $X\in \obj \cu$ the (corepresentable) functor $\cu(X,-)$
is homological, whereas $\cu(-,X)$ is cohomological.

4. See Remark 1.5.5 of \cite{bws} or Proposition 1.8 of \cite{hebpo}.

5. Easy from Proposition 1.5.6(2) of \cite{bws}. % ibid. %(note that one can take).

6. We prove the assertion in question for $X=\bigoplus_{i_1\le i \le i_2} X_i$ where $X_i\in \obj B_i$, by induction in $i_2-i_1$. For $i_2-i_1\le 1$ the statement follows from our assumptions.

%conv??!!

Suppose now that our claim holds if $i_2-i_1< m$ for some $m>1$. Let $i_2-i_1=m$. 
We decompose $s$ as $\bigoplus s_{ij}$, $s_{ij}\in B(s_i,s_j)$. Our orthogonality assumption yields that the morphism $s_1=s_{i_2,i_2}:X_{i_2}\to X_{i_2}$ is idempotent, as well as $s_2=\bigoplus_{i_1\le i,j< i_2}s_{ij}$.
The inductive assumption yields that $s_1$ and $s_2$ correspond to certain $X_1,Y_1,X_2,Y_2\in \obj B$ respectively. 

Now, it  suffices to verify that the morphism $s$ is conjugate to $s_1+s_2$. Denote $s-s_1-s_2$ by $d$. Then our orthogonality assumptions yield: $s_1\circ s_2=s_2\circ s_1=d\circ  s_2=s_1\circ d=d^2=0$. Besides, $d=d\circ s_1+s_2 \circ d$; composing this with $s_2$ %from the left
 we obtain that  $s_2\circ d\circ s_1=0$. Hence $(\id_X-d\circ s_1+s_2\circ d)(\id_X+d\circ s_1-s_2\circ d)=\id_X$; therefore for $h=\id_X-d\circ s_1+s_2\circ d$ we have $h\circ  (s_1+s_2)\circ h^{\ob}=s$. %???!!

%??!! 

\end{proof}

Below we will need a certain class of 'nice' weight decompositions.

\begin{defi}\label{dnwd}
For some $\cu,t,w$ we will say that a distinguished triangle (\ref{ewd}) (for some $i,X$) is {\it nice} if 
$w_{\le i}X,X,w_{\ge i+1}X\in \cu^{t=0}$. 

We will also say that this distinguished triangle is a {\it nice decomposition} of $X$ (for the corresponding $i$), and that the morphism $w_{\le i}X\to X$ extends to a nice decomposition.
\end{defi}

Now we %prove
formulate a simple implication of  Lemma \ref{lbas} (we will use a very easy case of it below, and a somewhat more complicated one in \cite{bmm}). %(4). Remark??!! 

\begin{lem}\label{lext}
We fix some $\cu,w,t,i$; suppose that for a certain $N\subset \cu^{t=0}$ a nice %choice of (\ref{ewd}) 
decomposition exists for any $X\in N$. Consider $N'\subset \cu^{t=0}$ being the smallest subclass containing $N$ that 
satisfies the following condition: if $A,C\in N'$, $$A\stackrel{f}{\to}B\stackrel{g}{\to}C$$
is a complex (i.e. $g\circ f=0$), $f$ is monomorphic, $g$ is epimorphic, $ \ke g/\imm f\in N'$, then $B\in N'$. Then a nice %choice of (\ref{ewd}) 
decomposition exists for any $X\in N'$ (and the same $i$).

%??!! + the exactness argument below that uses Lemma 1.5.4 of \cite{bws}??!!

%is extension-stable a
\end{lem}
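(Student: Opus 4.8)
The plan is to introduce the class
$$N''=\{X\in\cu^{t=0}:\ X \text{ admits a nice decomposition for the fixed } i\}$$
and to prove $N'\subset N''$. By hypothesis $N\subset N''$, and by construction $N\subset N'$. Since $N'$ is the \emph{smallest} subclass of $\cu^{t=0}$ containing $N$ and closed under the stated operation, it suffices to check that $S:=N'\cap N''$ is itself closed under that operation; then $N\subset S$ together with minimality forces $N'\subset S\subset N''$. Thus the whole statement reduces to a single closure step: given $A,C\in S$ and a complex $A\stackrel{f}{\to}B\stackrel{g}{\to}C$ in $\hrt$ with $f$ monomorphic, $g$ epimorphic and $H:=\ke g/\imm f\in S$, I must show $B\in S$. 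That $B\in N'$ is immediate from $A,C,H\in N'$ and the closure property of $N'$; the real content is $B\in N''$, i.e. that $B$ admits a nice decomposition once $A$, $C$ and $H$ do.

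\textbf{Reduction to the envelope.} I would work inside the abelian heart $\hrt=\cu^{t=0}$. Since $f$ is monomorphic and $g$ epimorphic with $g\circ f=0$, one has subobjects $\imm f\cong A\subset\ke g=:K\subset B$ with $K/A\cong H$ and $B/K\cong C$. The two short exact sequences $0\to A\to K\to H\to 0$ and $0\to K\to B\to C\to 0$ in $\hrt$ yield, as usual for hearts of $t$-structures, two distinguished triangles $A\to K\to H\to A[1]$ and $K\to B\to C\to K[1]$ in $\cu$. Hence $K$ lies in the envelope of $\{A,H\}$, and therefore $B$ lies in the envelope of $\{A,H,C\}$.

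\textbf{Applying Lemma \ref{lbas}.4.} Set $D=\cu^{w\le i}\cap\cu^{t=0}$ and $E=\cu^{w\ge i+1}\cap\cu^{t=0}$. Both are extension-stable, being intersections of extension-stable classes, and the weight orthogonality $\cu^{w\le i}\perp\cu^{w\ge i+1}$ (together with $\cu^{w\ge i+2}\subset\cu^{w\ge i+1}$) gives $D\perp(E\cup E[1])$. The key observation is that a nice decomposition of an object $X\in\cu^{t=0}$ is precisely a distinguished triangle $Y\to X\to Z$ with $Y\in D$ and $Z\in E$: any triangle whose left term lies in $\cu^{w\le i}$ and whose right term lies in $\cu^{w\ge i+1}$ is a weight decomposition, and the extra memberships in $\cu^{t=0}$ are exactly the niceness condition. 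Since $A,H,C\in N''$ admit such triangles, Lemma \ref{lbas}.4 applied to $F=\{A,H,C\}$ produces such a triangle for every object of its envelope, in particular for $B$; as $B\in\cu^{t=0}$, this triangle is a nice decomposition. Thus $B\in N''$, whence $B\in S$, which completes the closure step and the proof.

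\textbf{Main obstacle.} Most of the argument is bookkeeping around Lemma \ref{lbas}.4, and the two genuinely delicate points are: (i) the self-referential minimality in the definition of $N'$, which I handle by passing to the intersection $N'\cap N''$ rather than trying to induct directly; and (ii) recognizing that the abelian datum ``complex with middle homology $H$'' expresses $B$ as an iterated extension of $A$, $H$ and $C$, so that $B$ falls into the envelope on which Lemma \ref{lbas}.4 operates. Once these are in place the niceness of the resulting decomposition is automatic.
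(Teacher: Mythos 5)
Your proposal is correct and takes essentially the same route as the paper: the paper's (one-sentence) proof observes that $N'$ is the smallest extension-stable class containing $N$ --- precisely your observation that the complex condition realizes $B$ as an iterated extension of $A$, $H$, $C$ via two short exact sequences in $\hrt$ --- and then applies Lemma \ref{lbas}(4) with the same $D=\cu^{w\le i}\cap\cu^{t=0}$ and $E=\cu^{w\ge i+1}\cap\cu^{t=0}$. Your argument via minimality of $N'$ and the intersection $S=N'\cap N''$ is just a careful unpacking of that identification, so the two proofs coincide in substance.
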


\begin{proof}
It suffices to note that $N'$ %belongs to 
is exactly the smallest extension-stable subcategory of $\cu$ containing $N$, and apply Lemma \ref{lbas}(4).
\end{proof}

Next we study certain ('weight') filtrations of triangulated categories.

 \begin{defi}\label{dsosc}
1. We will say that a family $\{\au_i\},\ \au_i\subset \cu,\ i\in\z$ %of additive %extension-stable?? automatic in our case??! subcategories
is {\it semi-orthogonal} if $\au_i\perp \au_j[s]$ for any $i,j,s\in \z$ such that $s<0$, or $s>i-j$.

We will say that $\{\au_i\}$ are {\it strongly semi-orthogonal}
 if we also have  $\au_i\perp \au_j$ for any   $i> j$ (and so, for any $i\neq j$). %cohomology of Chow motives is in negative t-degrees??!!

2. We will say that $\{\au_i\}$ is generating (in $\cu$) if 
$\lan \cup_i \obj A_i\ra=\cu$.
%there does not exist a strict triangulated $\cu'\subset \cu$, $\cu'\neq \cu$, such that all $\au_i\subset \cu'$. %idempotents??! Possible, but not necessary??!!!
\end{defi}

%In the proof of the Theorem
%We will need the following (easy, and not at all original) lemma.

Now we prove that a semi-orthogonal generating family yields a certain ('weight') filtration  for $\cu$ in the sense of Definition E17.1 of \cite{bvk}, and study its properties.

\begin{lem}\label{lwf}
Let $i> j\ge n\in \z$.

I  Suppose that $\{\au_s\subset \cu\}$ is a semi-orthogonal family; denote $\lan \au_s\ra$ by $\cu_s$ (for any $s\in \z$).

Then $\cu_j\perp \cu_i$.

II Let $\cu_l\subset \cu$ for $l\in
\z$ be triangulated; suppose that $\cu_l\perp \cu_m$ if $l<m$.

For any $r\le q\in \z$ denote $\lan \cup_{r\le s\le q}\obj \cu_{s} \ra$
by $\cu_{[r,q]}$, and   denote $\lan \cup_{s\le r}\obj \cu_{s} \ra$ by $\cu_{\le r}$.

Then the following statements are fulfilled.

1.  For any $X\in \obj\cu_{[n,i]}$ there exists a distinguished triangle
\begin{equation}\label{ewf}
X_1\to X\to X_2
\end{equation}
  such that $X_1\in \obj \cu_{[n,j]}$, $X_2\in \obj \cu_{[j+1,i]}$. More
  generally, for $X\in \cu_{\le i}$ one can find (\ref{ewf}) with $X_1\in \obj \cu_{\le j}$.

  Besides, this triangle is (canonical and) functorial in $X$ (in both cases).

2. The embedding $\cu_{[j+1,i]}\to \cu_{\le i}$ possesses an exact %right
left adjoint $a_{i,j}$; the 'kernel' of $a_{i,j}$ is exactly $\cu_{\le j}$.

3. Suppose that $\{\cu_l\}$ are generating. Then  the embedding
 $\cu_{\le i}\to \cu$ possesses an exact right adjoint $b_i$.

%3. Dually???!! Adjoint to $\cu_{\ge j}\to $\cu_{\ge i}$??!!

%Then the inclusion
\end{lem}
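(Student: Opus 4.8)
The plan is to prove the three numbered assertions of part II in order, since each feeds the next, and to handle part I first as the orthogonality input that makes part II work. Throughout I would exploit the orthogonality hypothesis $\cu_l\perp\cu_m$ for $l<m$ together with Lemma \ref{lbas}(1) (which produces morphisms of triangles under a $\perp$ condition and gives uniqueness when a stronger $\perp$ holds) and Lemma \ref{lbas}(4) (which propagates the existence of a splitting triangle to the envelope). The guiding principle is that $\perp$-conditions turn ``existence of a decomposition for generators'' into ``existence and functoriality for all objects of the relevant subcategory''.

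\textbf{Part I and the splitting triangle (II.1).} For part I, I would unwind $\cu_j=\lan\au_j\ra$ and $\cu_i=\lan\au_i\ra$: an object of $\cu_j$ is built from shifts $\au_j[s]$ by cones, and likewise for $\cu_i$. Using the semi-orthogonality $\au_j[s]\perp\au_i[s']$ for all the relevant index ranges, together with Lemma \ref{lbas}(3) (that $\perp$ passes to envelopes, and hence to the triangulated subcategories $\cu_j,\cu_i$ since these are built by cones and shifts), I would conclude $\cu_j\perp\cu_i$. For II.1, I would first establish existence for the generators of $\cu_{[n,i]}$ — objects lying in a single $\cu_s[r]$ admit the trivial triangle — and then invoke Lemma \ref{lbas}(4), with $D=\cu_{[n,j]}$, $E=\cu_{[j+1,i]}$ (both extension-stable, being triangulated), and the orthogonality $\cu_{[n,j]}\perp(\cu_{[j+1,i]}\cup\cu_{[j+1,i]}[1])$ furnished by part I, to extend the splitting triangle (\ref{ewf}) to the whole envelope $\cu_{[n,i]}$, and similarly to $\cu_{\le i}$. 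Canonicity and functoriality would follow from the uniqueness clause of Lemma \ref{lbas}(1): the condition $\cu_{[n,j]}\perp\cu_{[j+1,i]}$ (without a shift) makes the completing morphisms $h,i$ unique, so the assignment $X\mapsto(X_1,X_2)$ is functorial.

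\textbf{The adjoint $a_{i,j}$ (II.2).} Given functoriality of (\ref{ewf}), I would define $a_{i,j}(X)=X_2$, the functorial cone of the decomposition; exactness (triangulatedness) follows since cones of morphisms of distinguished triangles are distinguished and the construction commutes with shifts. The adjunction $\cu_{[j+1,i]}(a_{i,j}X,-)\cong\cu_{\le i}(X,-)$ would come from applying $\cu(-,Y)$ for $Y\in\cu_{[j+1,i]}$ to the triangle $X_1\to X\to X_2$ and using $X_1\in\cu_{\le j}\perp\cu_{[j+1,i]}\ni Y$ (from part I) to kill the outer terms. The kernel of $a_{i,j}$ is exactly those $X$ with $X_2=0$, i.e. $X\cong X_1\in\cu_{\le j}$, giving the stated ``kernel''.

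\textbf{The right adjoint $b_i$ (II.3) and the main obstacle.} For II.3 I would exploit the generating hypothesis $\lan\cup_l\obj\cu_l\ra=\cu$: every object of $\cu$ is built from the $\cu_l$, so for $X\in\obj\cu$ I would construct a triangle $X_{\le i}\to X\to X_{>i}$ functorially with $X_{\le i}\in\cu_{\le i}$ and $X_{>i}\in\cu_{>i}:=\lan\cup_{l>i}\obj\cu_l\ra$, using the same Lemma \ref{lbas}(4) machinery (now with $D=\cu_{\le i}$, $E=\cu_{>i}$, and $\cu_{\le i}\perp\cu_{>i}$ from part I) once the decomposition is checked on generators. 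Then $b_i(X)=X_{\le i}$, with the right-adjoint property $\cu(Z,X)\cong\cu(Z,b_iX)$ for $Z\in\cu_{\le i}$ following by applying $\cu(Z,-)$ to the triangle and using $Z\perp X_{>i}$ to annihilate the error terms. The main obstacle I anticipate is the generating step: whereas $\cu_{[n,i]}$ and $\cu_{\le i}$ are generated by finitely (or boundedly) many $\cu_l$, the full category $\cu$ requires handling \emph{arbitrarily high} weights $l>i$, so I must verify that the envelope argument of Lemma \ref{lbas}(4) still applies when the ``upper'' subcategory $\cu_{>i}$ is an infinite union — concretely, that $\cu_{\le i}\perp\cu_{>i}$ (an infinite orthogonality) genuinely follows from the pairwise $\cu_l\perp\cu_m$ and that every generator of $\cu$ lives in some bounded window $\cu_{[r,q]}$, so that the splitting can be produced at the generator level before extending to the envelope.
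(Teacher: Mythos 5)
Your proposal is correct and follows essentially the same route as the paper: part I via Lemma \ref{lbas}(3), the decomposition triangle (\ref{ewf}) via trivial triangles on generators plus Lemma \ref{lbas}(4), canonicity/functoriality via the uniqueness clause of Lemma \ref{lbas}(1), and the adjoints $a_{i,j}$, $b_i$ by the standard argument of applying $\cu(-,Y)$ resp. $\cu(Z,-)$ to the functorial triangle (which the paper simply cites as Proposition E.15.1 of \cite{bvk}). The ``main obstacle'' you anticipate in II.3 is not a genuine one: Lemma \ref{lbas}(3,4) carry no finiteness restrictions, so $\cu_{\le i}\perp \cu_{>i}$ and the envelope argument go through verbatim, and alternatively (as the paper implicitly does) every object of $\cu$ lies in some bounded window $\cu_{[r,q]}$, so II.3 reduces to II.1.
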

\begin{proof}
I If $i<l$, then $\au_i[r]\perp \au_l$ for any $r\in \z$ (by Definition \ref{dsosc}). Now the result is immediate from Lemma \ref{lbas}(3).

II 1. Since $w$ is bounded, we have $\cu_{\le i}=\cup_{m\le i}\cu_{[m,i]}$; hence it suffices to verify the existence of (\ref{ewf}) for $X\in \cu_{[n,i]}$. 

%We verify that assertion 1 implies assertion 2 (for any $\{\cu_i\}$ satisfying the corresponding orthogonality condition).

We have a 'trivial' example of (\ref{ewf}) if $X\in \cu_l$ for any $i\ge l\ge n$.
Hence the existence of (\ref{ewf}) in general is immediate from Lemma \ref{lbas}(4). Now, any morphism $X\to X'$ could be uniquely extended to a morphism of the corresponding triangles by Lemma \ref{lbas}(1).
Hence we obtain the functoriality of (\ref{ewf}).

2,3: Immediate from assertion II1 by well-known homological algebra statements; see %see also??! %conv
Proposition E.15.1 of \cite{bvk}.

\end{proof}

\subsection{Transversal weight and \texorpdfstring{$t$}{t}-structures: equivalent definitions and their consequences}\label{sdtrans}

\begin{theo}\label{transmain}

The following conditions are equivalent.

(i) There exists a strongly semi-orthogonal  generating family $\{\aui\}$ in $\cu$ such that all of $\aui$ are abelian semi-simple.

(ii) There exists a semi-orthogonal  %generating 
family $\{\aui\}$ in $\cu$ such that for  $\cu^{t\le 0}$ (resp. $\cu^{t\ge 0}$) being the envelope of $\cup_{i\in\z,j\ge 0}\aui[j]$  (resp. of $\cup_{i\in\z,j\le 0}\aui[j]$) we have: $(\cu^{t\le 0}, \cu^{t\ge 0})$ yield a $t$-structure for $\cu$.

(ii') There exists a semi-orthogonal  
 family $\{\aui\}$ in $\cu$ such that the envelope of $\cup_{i\in\z}\aui$ yields the heart of a certain $t$.

 %$t$ is bounded??!!

(iii) There exist a $t$ and a semi-orthogonal family $\{\aui\subset \hrt\}$ that satisfy the following condition: for each $X\in \cu^{t= 0}$ 
there exists an exhaustive separated
increasing filtration by subobjects $W_{\le i}X$, $i\in \z$, such that $W_{\le i}X/W_{\le i-1}X$ belongs to $\obj\aui$ for all $i\in \z$.

(iii') %For $t$ 
The filtration (of any $X\in \cu^{t=0}$) described above exists and is $\hrt$-functorially determined by $X$.

(iv) There are $t,w$ for $\cu$ such that for any $X\in \cu^{t=0}$, $i\in \z$, there exists a nice %choice of (\ref{ewd}) 
decomposition (see Definition \ref{dnwd}). % with $w_{\ge i+1} X$ and $w_{\le i}X$ belonging to $\cu^{t=0}$.
%nice choice of (\ref{ewd})??!!

(iv') %The nice choice of (\ref{ewd}) 
Nice decompositions  exist, and they are also $\hrt$-functorial in $X$ (if we fix $i$); the corresponding  functors $X\mapsto w_{\le i} X$ 
and $X\mapsto w_{\ge i+1} X$ are exact (on $\hrt$).

(v) There are $t,w$ for $\cu$ such that for any $X\in \cu^{t=0}$, $i\in \z$, there exists a choice of $w_{\le i}X$ 
such that the morphism $\imm ((w_{\le i}X)^{\tau=0}\to X)\to X$ extends to a nice decomposition of $X$.

(v') For  $t,w$ and any $X,i,w_{\le i}X$ (as above) the morphism $\imm ((w_{\le i}X)^{\tau=0}\to
X)\to X$ extends to a nice decomposition of $X$.

%(v) There exist $t,w$ such that $\aui=\cu^{t=0}\cap \cu^{w=i}$ generate $\cu$.

\end{theo}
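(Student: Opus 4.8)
The plan is to treat (i) as a hub: first I would show that a strongly semi-orthogonal generating family of semi-simple abelian $\au_i$ manufactures \emph{simultaneously} a $t$-structure and a transversal weight structure, together with all the filtration and nice-decomposition data, so that (i) implies every other condition; then I would close the circle by recovering a family as in (i) out of each of the remaining conditions. For the forward direction write $\cu_s=\lan\au_s\ra$; Lemma \ref{lwf}(I) converts semi-orthogonality into $\cu_j\perp\cu_i$ for $j<i$, so Lemma \ref{lwf}(II) supplies the filtration of $\cu$ by the $\cu_{\le r}$ together with the exact adjoints $a_{i,j},b_i$. On each $\cu_i$ the class $\au_i$ is the heart of a bounded $t$-structure, the point being that $\cu(\au_i,\au_i[n])=0$ for every $n\neq 0$ by the two cases $s<0$ and $s>i-i'=0$ of Definition \ref{dsosc}; gluing these $t$-structures along the semi-orthogonal filtration (the BVK construction cited in Lemma \ref{lwf}) produces $t$ on $\cu$ with heart the envelope of $\cup_i\au_i$, which is exactly (ii) and (ii').

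Next I would build $w$ directly by envelopes: let $\cu^{w\le 0}$ (resp.\ $\cu^{w\ge 0}$) be the Karoubi-closed envelope of $\{\au_i[n]:n\le -i\}$ (resp.\ $\{\au_i[n]:n\ge -i\}$), so that $\au_i\subset\cu^{t=0}\cap\cu^{w=i}$. The orthogonality $\cu^{w\le 0}\perp\cu^{w\ge 1}$ reduces, via Lemma \ref{lbas}(3), to $\au_i\perp\au_{i'}[s]$ with $s>i-i'$ (the complementary case to the one used for $t$). Weight decompositions follow from Lemma \ref{lbas}(4): here $E=\cu^{w\ge 1}$ satisfies $E[1]\subseteq E$, so the hypothesis $D\perp(E\cup E[1])$ is just $\cu^{w\le 0}\perp\cu^{w\ge 1}$, and a trivial decomposition on each pure generator extends over the generated category. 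I would then read off that $\hw$ is the Karoubi-closure of $\bigoplus_i\au_i[-i]$, its idempotent-completeness being precisely Lemma \ref{lbas}(6) applied with $B_i=\au_i[-i]$ (whose one-sided orthogonality is again the $s<0$ case). Restricting the filtration of Lemma \ref{lwf} to $\cu^{t=0}$ yields the increasing filtration $W_{\le i}X$ of (iii), with graded pieces in $\au_i$; since $W_{\le i}X\in\cu^{w\le i}$ and $X/W_{\le i}X\in\cu^{w\ge i+1}$ by extension-stability, the short exact sequence $W_{\le i}X\to X\to X/W_{\le i}X$, read as a triangle, is a nice decomposition in the sense of Definition \ref{dnwd}, giving (iv). Finally (v) and (v') come from analysing $(w_{\le i}X)^{\tau=0}\in\cu^{w\le i}\cap\cu^{t=0}$ for an arbitrary weight decomposition: a diagram chase identifies the image of $(w_{\le i}X)^{\tau=0}\to X$ with $W_{\le i}X$, whence the same nice decomposition.

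The equivalence of (ii) and (ii') is the standard fact that a bounded $t$-structure is recovered from its heart, so that $\cu^{t\le 0},\cu^{t\ge 0}$ are forced to be the envelopes of the nonnegative and nonpositive shifts. The passage from the bare existence statements (iii), (iv), (v) to the functorial refinements (iii'), (iv'), (v') is governed by the uniqueness clause of Lemma \ref{lbas}(1): in the strong form $B\perp A'$ the completing morphisms are unique, so the decompositions are canonical and functorial, and exactness of $X\mapsto w_{\le i}X$ on $\hrt$ follows from the long exact sequence attached to the filtration. Lemma \ref{lext} would be invoked to propagate nice decompositions from a generating subclass to its extension-closure when verifying these conditions in practice.

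To close the equivalences I would start from the weakest-looking hypotheses — a mere semi-orthogonal family yielding $t$ as in (ii), or a pair $(t,w)$ admitting nice decompositions as in (iv)/(v) — and reconstruct a family as in (i) by setting $\au_i:=\cu^{t=0}\cap\cu^{w=i}$. The genuinely non-formal point, and the step I expect to be the main obstacle, is upgrading plain semi-orthogonality to \emph{strong} semi-orthogonality and establishing semi-simplicity, since neither is forced by the weight axioms alone: the vanishing $\cu(\cu^{w\le j},\cu^{w\ge j+1})=0$ controls only maps from low to high weight, whereas $\au_i\perp\au_j$ for $i>j$ concerns maps of the opposite variance. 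Here I would use the strictness of the weight filtration $W_{\le\bullet}$ on $\hrt$ (morphisms strictly respecting it, so a map between pure pieces of distinct weights has image simultaneously of two weights and hence vanishes), combined with the $\ext$-vanishing $\cu(X,Y[1])=0$ for $X,Y\in\au_i$ (as $Y[1]\in\cu^{w\ge i+1}$ is orthogonal to $X\in\cu^{w\le i}$), which forces every extension inside $\au_i$ to split and thereby delivers the abelian semi-simplicity demanded by (i).
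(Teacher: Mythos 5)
Your overall architecture (hub at (i); Karoubi‑closed envelopes of the $\au_i[n]$ to produce $w$; $\au_i:=\cu^{t=0}\cap\cu^{w=i}$ for the converse; BVK‑style strictness of the weight filtration for the cross‑orthogonality; $\ext$‑vanishing for splitting) tracks the paper's proof quite closely, but there is a genuine gap precisely at the step you yourself single out as the main obstacle. From $\au_i\perp\au_i[1]$ you conclude that "every extension inside $\au_i$ splits and thereby delivers abelian semi‑simplicity". This inference is invalid as stated: before one can speak of semi‑simplicity one must know that $\au_i$ is an \emph{abelian} category — in particular closed under kernels and cokernels computed in $\hrt$ — and that short exact sequences with terms in $\au_i$ arise from distinguished triangles of $\cu$, so that extension classes really live in $\cu(X,Y[1])$. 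A full additive subcategory in which all extensions split need not be abelian at all (finitely generated free abelian groups inside finitely generated abelian groups have vanishing $\ext$ throughout, yet form no abelian, let alone semi‑simple, category). Closure of $\cu^{t=0}\cap\cu^{w=i}$ under kernels and cokernels is the genuinely non‑formal content of the theorem, and nothing in your sketch delivers it. The paper proves it by restricting $t$ to $\cu_i=\lan\au_i\ra$: using the exact adjoints $a_{i,j}$, $b_i$ of Lemma \ref{lwf}(II) together with Lemma E19.1 of \cite{bvk}, it shows these adjoints respect $\hrt$ and $t$‑decompositions, identifies $\au_i$ with the heart of the restricted $t$‑structure — hence abelian, with short exact sequences giving triangles — and only then invokes $\au_i\perp\au_i[1]$. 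Alternatively, closure under kernels and cokernels can be extracted from the exactness of the functors $W_{\le i}$ on $\hrt$, i.e. from the full strength of (iv'); but then you must actually establish (iv') (functoriality \emph{and} exactness of nice decompositions, via Lemma \ref{lbas}(1,2) and a $3\times 3$ argument) in the converse direction, where it is needed, not merely in the descent from (i).

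A secondary under‑justified step is (i)$\implies$(ii). You propose to "glue" the $t$‑structures of the pieces $\cu_i$ along the filtration via "the BVK construction cited in Lemma \ref{lwf}", but Lemma \ref{lwf}(II) supplies only one‑sided adjoints (a left adjoint to $\cu_{[j+1,i]}\hookrightarrow\cu_{\le i}$ and a right adjoint to $\cu_{\le i}\hookrightarrow\cu$), not a recollement, so BBD‑type gluing is not directly available. The paper instead verifies the existence of $t$‑decompositions by induction along the filtration, via a $3\times 3$ diagram (Proposition 1.1.11 of \cite{bbd}); the crucial input there is that the connecting map $X_2\to X_1[1]$ can be completed to a morphism of the $t$‑decomposition triangles, which requires $\tau_{\le 0}X_2\perp(\tau_{\ge 1}X_1)[1]$ — and the $s=0$ component of this vanishing is exactly the \emph{strong} half of strong semi‑orthogonality ($\au_l\perp\au_j$ for $l>j$). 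So strong semi‑orthogonality is already indispensable for building $t$ out of (i), not only in the reconstruction direction as your sketch suggests. (Minor, in the same vein: when reconstructing (i) from (ii) you set $\au_i:=\cu^{t=0}\cap\cu^{w=i}$, but (ii) provides no $w$; you must first run your envelope construction to obtain one.)
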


 %Lemma: common weight filtration??!!
\begin{proof}%[Proof of Theorem \ref{transmain}]
Certainly, (ii') implies (ii) (since $t$ is bounded), (iii') implies (iii),  (iv') implies (iv), and  (v') implies (v).

%Now we verify that
{\bf (i) $\implies$ (ii).}

Semi-orthogonality yields that the 'generators' of  $\cu^{t\le 0}[1]$ are orthogonal  to those of $\cu^{t\ge 0}$; hence Lemma \ref{lbas}(3) yields:  
 $\cu^{t\le 0}[1]\perp \cu^{t\ge 0}$.

It remains to verify the existence of $t$-decompositions.

For any $i\in \z$ we have $\cu_i\cong\bigoplus_{j\in \z} \au_i[j]$ (in the notation of loc.cit.). Indeed, the obvious comparison functor $\bigoplus_{j\in \z} \au_i[j]\to \cu_i$ is an equivalence of triangulated categories, since $\au_i$ is semi-simple and $\aui\perp\aui[j]$ for any $j\neq 0$.

Hence any object of $\cu_i$ admits a $t$-decomposition $X\cong \tau_{\le 0}X\bigoplus \tau_{\ge 1}X$ whose components also belong to  $\cu_i$. 

 Now, it suffices to verify: if for some $j<i$ any object of $\cu_{[j+1,i]}$ admits a $t$-decomposition inside $\cu_{[j+1,i]}$, 
 then a similar statement holds for any $X\in \obj\cu_{[j,i]}$.

 Lemma \ref{lwf}(II1)
  yields the existence of a distinguished triangle $X_1\to X\to X_2\stackrel{g}\to X_1[1] $
  such that $X_1\in \obj \cu_{j}$, $X_2\in \obj \cu_{[j+1,i]}$. %We also obtain that $X_1\in \obj cu_{[i+1,j]}$. %Why??!

  Now we argue  as in the proof of Lemma 1.5.4 of \cite{bws}.
We can complete $g$ to a morphism of distinguished triangles
\begin{equation}\label{d2x3}
\begin{CD}
\tau_{\le 0}X_2@>{%a_2
}>>X_2@>{%b_2
}>>\tau_{\ge 1}X_2 \\
@VV{}V@VV{g}V @VV{}V\\
(\tau_{\le 0}X_1)[1]@>{%a_1[1]
}>>X_1[1]@>{%b_1[1]
}>>(\tau_{\ge 1}X_1)[1]\\
\end{CD} \end{equation}
Indeed, by Lemma \ref{lbas}(1) it suffices to verify that $\tau_{\le 0}X_2\perp (\tau_{\ge 1}X_1)[1]$; the latter easily follows from the strong semi-orthogonality of $\{\au_s\}$ (see Lemma \ref{lbas}(3)). %more??!!

 %the composition morphism $\tau_{\le 0}X\to (\tau_{\ge 2}X_1)[1]$ necessarily vanishes
%$\tau_{\le 0}X_2\perp (\tau_{\ge 2}X_1)[1]$ by the orthogonality for $t$ (that we have already proved); see Lemma 1.4.1 of \cite{bws}.
%$t$-decompositions are functorial.

Moreover, we can complete $g$ to the following diagram (starting from the left hand side square of (\ref{d2x3}), and using Proposition 1.1.11 of \cite{bbd}):
 \begin{equation}\label{dia3na3}
\begin{CD}
\tau_{\le 0}X_2@>{%a_2
}>>X_2@>{%b_2
}>>\tau_{\ge 1}X_2 \\
@VV{}V@VV{g}V @VV{}V\\
(\tau_{\le 0}X_1)[1]@>{%a_1[1]
}>>X_1[1]@>{%b_1[1]
}>>(\tau_{\ge 1}X_1)[1]\\
 @VV{}V@VV{}V @VV{}V\\
Y[1]@>{}>>X[1]@>{}>>Z[1]\\
 @VV{}V@VV{}V @VV{}V\\
(\tau_{\le 0}X_2)[1]@>{%a_2
}>>X_2[1]@>{%b_2
}>>(\tau_{\ge 1}X_2)[1] \\
\end{CD}
\end{equation}
such that all rows and columns are distinguished triangles, and all
squares are commutative. Therefore the extension-stability of $\cu^{t\le 0}$ yields that it contains $Y$;  
the extension-stability of $\cu^{t\ge 1}$ yields that it contains $Z$;  hence $Y\to X\to Z$ is the $t$-decomposition desired.

{\bf (ii) $\implies$ (iv).}
%Now we verify that (ii) implies (iv). %(iv'). %(iii')??!!
 %The semi-orthogonality condition implies that we can apply Remark 1.5.5 of \cite{bws} again here. So
 We take $C_1$ being the envelope of $\{\aui[j],\ i+j \ge 0,\ i,j\in \z\}$ in $\cu$, %(see the Notation),
 $C_2$ being the envelope of 
 $\{\aui[j],\ \ i+j\le 0\}$. Note that
 %$\cu^{w\le 0}$ 
 $C_1$ is the envelope of $\{H[j],\ j\ge 0\}$, %$\cu^{w\ge 0}$
 $C_2$ is the envelope of $\{H[j],\ j\le 0\}$, where $\obj H=\bigoplus_{i\in \z}\obj\aui[i]$.
 Besides, $H$ is negative i.e. $H\perp H[j]$ for all $j>0$.

 Hence, as shown (in the proof of) Theorem 4.3.2(II) of \cite{bws}, there exists a weight structure $w$ (for $\cu$) such that $\cu^{w\ge 0}$ (resp. $\cu^{w\le 0}$) is the Karoubi-closure of $C_1$ (resp. $C_2$) in $\cu$. 
 yield a bounded weight structure $w$ for $\cu$ (actually, this is a simple consequence of Lemma \ref{lbas}(3,4)).
  Moreover, the heart of this weight structure is the idempotent completion of $H$. Now, $H$ is idempotent complete itself,
   since all $\au_i$ are (note that $\aui[-i]\perp \au_j[-j]$ for $j>i$, hence we can apply Lemma \ref{lbas}(6)). %to Lemma \ref{lbas}???!!!
 Therefore $H=\hw$.
 %Since any bounded object of $\cu$ possesses a bounded Postnikov tower (whose 'factors' are objects of $\hw=H$; see ), we obtain
 Then Lemma \ref{lbas}(5) implies  that $\cu^{w\ge 0}=C_1$ and $\cu^{w\le 0}=C_2$ (i.e. we don't need   Karoubi-closures here; 
 here we use the fact that  $\cu^{w\le 0}=\cup_{i\le 0}\cu^{[i,0]}$ and  $\cu^{w\ge 0}=\cup_{i\ge 0}\cu^{[0,i]}$ for a bounded $w$).
 Lastly, by %Lemma 1.5.4 of \cite{bws},
Lemma %\ref{lbas}(4)
\ref{lext}
 it suffices to verify the existence of %filtrations
 nice %weight
  decompositions for those objects of $\hrt$ that belong to one of $\au_i$; this is obvious.

{\bf (v) $\implies$ (iv)}: obvious.

{\bf (iv) $\implies$ (v').} It suffices to note: by
Proposition 2.1.2(1) of \cite{bws}, $\imm ((w_{\le i}X)^{\tau=0}\to X)$ does not
depend on the choice of $w_{\le i}X$ (cf. Definition \ref{dwfil} below). Hence it suffices to consider the case
when $w_{\le i}X$ comes from a nice decomposition of $X$, and then the
statement is obvious.

 %Now %, suppose that
 %we have (iv). %nado?? From (i) directly??!!
%we verify that 
{\bf (iv) $\implies$  (iv') and (iii).} %(iii').

We set $\aui=\cu^{t=0}\cap \cu^{w=i}$. 
The orthogonality properties of  weight and $t$-structures immediately yield that $\au_i$ are semi-orthogonal.
%By the orthogonality properties of  weight and $t$-structures we obtain that  $\au_i\perp \au_j[s]$ for any $i,j,s\in \z$ such that $s<0$ or $s>j-i$.

Now we prove (iii). Since %for a nice %choice of 
%(\ref{ewd}) 
all terms of a nice %(\ref{ewd}) 
decomposition belong to $\cu^{t=0}$, it yields a short exact sequence in $\hrt$. In particular, the corresponding morphism  $w_{\le i}X\to X$ is monomorphic in $\hrt$.

Now suppose that $X\in \cu^{w\le i}$. Then we have $w_{\ge i}X\in \cu^{w=i}$; see Proposition 1.3.3(6) of \cite{bws}. Hence, %any 'nice' choice of 
$w_{\ge i}X$ belongs to $\aui$ for any nice (\ref{ewd}).
%conv

Loc.cit. also yields: if $X\in \cu^{w\ge j}$, $j<i$, then any choice of $w_{\le i}X$ belongs to $\cu^{w\ge j}$ also. Hence for $X\in \cu^{[r,s]}\cap \cu^{t=0}$, $r\le s\in \z$, one can take
$W_{\le l}X=X$ for $l\ge s$, then by induction starting from $i=s-1$ down to $i=r$  take a %'nice' 
choice of $W_{\le i}X$ coming from a nice %choice of (\ref{ewd}) for 
decomposition of $W_{\le i+1}X$ (that was constructed on the previous step), and set $W_{\le l}=0$ for $l<r$; this filtration would satisfy the conditions of (iii).

Now we verify (iv'). Any morphism in $\cu$ could be extended to a morphism of (any choices of) weight decompositions by  Lemma \ref{lbas}(2). Moreover, this extension is unique in our case by parts 1 and 3 of loc.cit. (here we apply the orthogonality statement proved above). Hence we obtain that nice %weight decompositions (coming) 
choices of (\ref{efwd}) (for a fixed $i$) yield a functor (here we take $X\in \cu^{t=0}$).

Now, Lemma 1.5.4 of \cite{bws} yields %@@@prove; modification???!!!
 that for any distinguished triangle $A\to B\to C$ in $\cu$, any triangles (\ref{ewd}) for $A,C$ could be completed to a diagram
\begin{equation}\label{dia3na3n}
\begin{CD}
w_{\le i}A@>{}>>A@>{}>>w_{\ge i+1}A \\
@VV{}V@VV{g}V @VV{}V\\
w_{\le i}B@>{}>>B@>{}>>w_{\ge i+1}B \\
 @VV{}V@VV{}V @VV{}V\\
w_{\le i}C@>{}>>C@>{}>>w_{\ge i+1}C
\\
\end{CD}
\end{equation} %to Lemma \ref{lbas}??!!
all of whose rows and columns are distinguished triangles (and the middle row is given by some choice of  (\ref{ewd}) for $B$).
Applying this fact for $A,B,C\in \cu^{t=0}$ and nice %choices of (\ref{ewd}) %weight 
decompositions of  
%for
$A,C$, we obtain that the middle row %yields a nice  (\ref{ewd}) % weight decomposition of 
is a nice decomposition of $B$ (since $\hrt$ is extension-stable in $\cu$). Then the exactness of columns (in $\hrt$)  concludes the proof of (iv').

Next we note that (ii) along with (iii) implies (ii'). Indeed, the envelope of $\aui$ obviously lies in $\hrt$, whereas (iii) %(along with the description of $\aui$ given above)
 yields that this inclusion is an equality.

%Now, we verify (iv').
  %more??!!
 %Hence we (iv) implies (iv').

 It remains to verify that (iii) implies (iii') and (i).

 To this end first we verify that (iii) implies (iv).
 Obviously, the family $\{\aui\}$ is generating (since $t$ is bounded).
 We consider $C_1,C_2\subset \cu$ introduced in the proof of (ii) $\implies$ (iv). As we have already noted above, the Karoubi-closures of $C_1$ and $C_2$ in $\cu$ yield a weight structure for $\cu$. Hence the distinguished triangles coming from 
 the short exact sequences $0\to W_{\le i}X\to X\to X/W_{\le i}X\to 0$ yield (\ref{ewd}). We obtain that (iii) implies (iv); hence (iii) also yields (iv'). 
 
 Obviously, (iv') implies (iii'). Also, (iv') yields that $\au_i\perp \au_j$ for $j>i$ by the  Remark E7.8 and Proposition E7.4(4) of \cite{bvk} (cf. Proposition \ref{pdab} below).

So, it remains to prove that $\aui$ are abelian semi-simple. We verify that (for a fixed $i\in \z$) the classes $\obj \cu_i\cap \cu^{t\le 0}$ and $\obj \cu_i\cap \cu^{t\ge 0}$ yield a $t$-structure for $\cu_i$ (i.e. that $t$ could be restricted to $\cu_i$).
To this end we note that 
  for any $i\ge j\in \z$, $X\in \cu_{\le i}$ (see Lemma \ref{lwf}(II)) 
  %any choice of a 
  the distinguished triangle $W_{\le j}X\to X\to X/W_{\le j}X$ (as considered above)
  is simultaneously a choice of (\ref{ewf}). 
    %: essentially we have just proved 
  It easily follows that all $b_j$ and $a_{i,j}$ %(see Lemma \ref{lwf}(II)) 
  respect $\hrt$. Hence they also respect $t$-decompositions; see Lemma E19.1 of \cite{bvk}. Hence  applying $a_{[i+1,i]}\circ b_i$ to the $t$-decomposition of $X\in \cu_i$ (see (\ref{etd})) we obtain that its components belong to $\cu_i$. We also obtain that the heart of this $t$-structure is $a_{[i,i-1]}\circ b_i(\hrt)=\aui$. Hence $\aui$ is an abelian category, and short exact sequences in it yield distinguished triangles in $\cu$. Therefore $\aui$ is abelian semi-simple, since $\aui\perp\aui[1]$ by semi-orthogonality.

\end{proof}

\begin{defi}\label{dtrans}
If $w,t$ satisfy %condition (iv') of the theorem (or any other condition),
the (equivalent) conditions of the theorem, we will say that
$t$ is transversal to $w$.
\end{defi}

\begin{rema}\label{rtf} %$H_i'\subset \hrt$??!!! Non-orthogonality interval 'moves to the positive numbers'?!!! Adjust notation to fit with \cite{bvk}??!!!

1. One could (try to) modify the conditions of the Theorem so in order to include the case when $w$ and $t$ are not (necessarily) bounded. Yet to this
end one would definitely require certain technical restrictions on $\cu$ (cf. Theorems 4.3.2 and
4.5.3 of \cite{bws}).

2. Some more details on connections between $w$, $t$, and $\{\aui\}$ are contained in
the proof of the Theorem. In particular, note that %$t$
the functor $X\mapsto \bigoplus_{i\in \z}\tau_{=i}X$ yields a splitting $\cu^{w=0}=\bigoplus \obj \aui[-i]$ (though we don't have an isomorphism of the corresponding categories, since there could be non-zero morphisms from $\aui[-i]$ into $\au_j[-j]$ for $j<i$).  Besides, $\hrt$ %
possesses a separated exhaustive filtration with semi-simple factors $\aui=\hrt\cap \hw[i]$.
%is necessarily noetherian and artinian.

3. Condition (i) of the Theorem is self-dual. If follows: if $w,t$ are transversal for $\cu$, then the structures $w^{op},t^{op}$ for the opposite category $\cu^{op}$ are transversal also. The latter structures are defined as follows: $\cu^{op,w^{op}\le 0}=\cu^{w\ge 0}$ and  $\cu^{op,w^{op}\ge 0}=\cu^{w\le 0}$; $\cu^{op,t^{op}\le 0}=\cu^{t\ge 0}$ and  $\cu^{op,t^{op}\ge 0}=\cu^{t\le 0}$ (cf. Remark 1.1.2(1) of \cite{bws}). 

Besides, for any $i,j\in \z$ the structures $w[i], t[j]$ are also transversal; here
$\cu^{w[i]\le 0}=\cu^{w\le i}$ and $\cu^{w[i]\ge 0}=\cu^{w\ge i}$; 
$\cu^{t[j]\le 0}=\cu^{t\le j}$ and $\cu^{t[j]\ge 0}=\cu^{t\ge j}$. 

4. Proposition 2.1.2(1) of \cite{bws} actually yields (cf. %the proof of the Theorem and 
Definition \ref{dwfil} below) that for {\bf any} $t,w$ the correspondence $X\mapsto \imm ((w_{\le i}X)^{\tau=0}\to X)$ yields a functor in $\hrt\to \hrt$; hence this is also true for $X\mapsto\cok ((w_{\le i}X)^{\tau=0}\to X)$. So, in order to verify that $t,w$ are transversal it suffices to verify that these functors take their values in $\cu^{w\le i}$ and $\cu^{w\ge i+1}$, respectively (for all $i\in \z$).

%5. Alternatively, one could describe $W_{\le i}X$ (given by part (iii') of the Theorem) as $\imm ((w_{\le i}X)^{\tau=0}\to (w_{\le i+1}X)^{\tau=0})$; so $W_{\le i}X$ also comes from the corresponding {\it virtual $t$-truncation} of $M\mapsto M^{\tau=0}$; see \S2.5 of \cite{bws}.

%weight complexes, $E_2T(H,X)$??!!

\end{rema}

Most of the following results were also (essentially) verified in the process of proving Theorem \ref{transmain}.

\begin{pr}\label{pdtr} Let $t$ be transversal to $w$, $i\in \z$. Then the
following statements are fulfilled.

 I1. The functors $X\mapsto \tau_{\le i}X$ and $X\mapsto \tau_{\ge
 i}X$ map $\cu^{w\le 0}$ and $\cu^{w\ge 0}$ into themselves.

 2. $X\in \cu^{w\le i}$ (resp. $X\in \cu^{w\ge i}$) whenever for
 any $j\in \z$ we have $W_{\le i+j}(X^{\tau=j})=X$  (resp. $W_{\le i+j-1}(X^{\tau=j})=X^{\tau=j}$; here $W_{\le i}(-)$ is the  filtration given by condition (iii') of  Theorem \ref{transmain}).

II1. The functor $X\mapsto W_{\le i}X$ (from $\hrt$ into $\hrt$)
given by condition (iii') of the Theorem, is right adjoint to the
embedding $\cu^{w \le i}\cap \hrt \to \hrt$; it is exact.

2. For $X\in \cu^{t=0}$ denote $X/W_{\le i}X$ by  $W_{\ge i+1}X$. Then the functor $W_{\ge i+1}(-)$ is left adjoint to the embedding $\cu^{w \ge +1}\cap \hrt \to \hrt$; it is exact.

III 1. The functors $Gr_i:X\mapsto W_{\ge i} (W_{\le i} X)$  and $Gr_i':X\mapsto W_{\le i} (W_{\ge i} X)$ (defined as the compositions of the functors from assertion II) are canonically isomorphic exact projections of $\hrt$ onto $\aui$. Moreover, $Gr_i(X)\cong W_{\le i} X/ W_{\le i-1} X$.

2. For $X\in \cu^{t=0}$ we have: $X\in \cu^{w\le i}$ (resp. $X\in \cu^{w\ge i}$) whenever $Gr_j(X)=0$ for all $j>i$ (resp. for all $j<i$).

%3 Strict compatibiliy of morphisms with the weight filtration on $\ht$??!! A more general result (for weak weight structures)??!!
\end{pr}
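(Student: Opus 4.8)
The plan is to first extract from the proof of Theorem~\ref{transmain} the dictionary that makes everything formal, and then to treat the parts in the order II, I, III, since I1 and I2 will rest on II. Recall that in proving (iv)$\implies$(iv') we set $\aui=\cu^{t=0}\cap\cu^{w=i}$, that for $X\in\cu^{t=0}$ a nice decomposition identifies $W_{\le i}X$ with a choice of $w_{\le i}X$ and $W_{\ge i+1}X$ with $w_{\ge i+1}X$, and that the functors $X\mapsto W_{\le i}X$ and $X\mapsto W_{\ge i+1}X$ on $\hrt$ were shown there to be exact; this already gives the exactness clauses of II1 and II2. For the adjunctions I apply $\cu(M,-)$ (resp. $\cu(-,N)$) to the triangle $W_{\le i}X\to X\to W_{\ge i+1}X$: for $M\in\cu^{w\le i}\cap\hrt$ the weight orthogonality $\cu^{w\le i}\perp\cu^{w\ge i+1}$ kills $\cu(M,W_{\ge i+1}X)$ and $\cu(M[1],W_{\ge i+1}X)$, whence $\cu(M,W_{\le i}X)\xrightarrow{\sim}\cu(M,X)$, i.e. $W_{\le i}$ is right adjoint to the inclusion; the left-adjoint statement for $W_{\ge i+1}$ is dual. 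The same orthogonality, together with the adjunctions, shows $\cu^{w\le m}\cap\hrt=\ke(W_{\ge m+1})$ (such an $M$ makes $W_{\ge m+1}M$ represent the zero functor on $\cu^{w\ge m+1}\cap\hrt$, hence $W_{\ge m+1}M=0$). Since $W_{\ge m+1}$ is exact this kernel is a Serre subcategory of $\hrt$; dually $\cu^{w\ge m}\cap\hrt=\ke(W_{\le m-1})$ is Serre. This ``Serre dictionary'' is the engine for part~I.

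For I2, by the self-duality of transversality (Remark~\ref{rtf}(3)) it suffices to treat the $\cu^{w\le i}$ case, which I read as: $X\in\cu^{w\le i}$ iff $X^{\tau=j}\in\cu^{w\le i+j}\cap\hrt$ (equivalently $\tau_{=j}X\in\cu^{w\le i}$) for all $j$. The implication $\Leftarrow$ is immediate, since then $X$ is assembled from the $\tau_{=j}X\in\cu^{w\le i}$ by the finitely many extensions of the $t$-Postnikov tower and $\cu^{w\le i}$ is extension-stable. For $\Rightarrow$, which is the crux, I argue by induction on the weight-length of $X\in\cu^{[n,i]}$ using the weight-Postnikov tower: each graded piece $P_l\in\cu^{w=l}$ ($n\le l\le i$) satisfies $P_l^{\tau=j}\in\au_{l+j}\subset\cu^{w\le i+j}\cap\hrt$ by the splitting $\cu^{w=l}=\bigoplus_m\obj\au_m[l-m]$ coming from Remark~\ref{rtf}(2); then the long exact $t$-cohomology sequence attached to each extension step, together with the Serre dictionary (closure of $\cu^{w\le i+j}\cap\hrt$ under subobjects, quotients and extensions), propagates the bound $X^{\tau=j}\in\cu^{w\le i+j}\cap\hrt$. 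Granting I2 and (by duality) its $\cu^{w\ge i}$ analogue, part~I1 is then formal: for $X\in\cu^{w\le 0}$ the truncation $\tau_{\le i}X$ has $t$-cohomology $X^{\tau=j}$ for $j\le i$ and $0$ otherwise, so its cohomologies still satisfy the I2 criterion and $\tau_{\le i}X\in\cu^{w\le 0}$; the remaining three combinations ($\tau_{\ge i}$, and $\cu^{w\ge 0}$) are identical after applying the dual criterion.

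For III1, $Gr_i=W_{\ge i}\circ W_{\le i}$ and $Gr_i'=W_{\le i}\circ W_{\ge i}$ are exact as composites of the exact functors of part~II. The standard isomorphism theorems for the functorial exact filtration $W$ give $W_{\ge i}(W_{\le i}X)\cong W_{\le i}X/W_{\le i-1}X\cong W_{\le i}(W_{\ge i}X)$, which simultaneously yields the canonical isomorphism $Gr_i\cong Gr_i'$, the displayed formula, and membership in $\aui$; on $\aui$ both $W_{\le i}$ and $W_{\ge i}$ act as the identity, so $Gr_i$ is a projection onto $\aui$. Finally III2 is read off the Serre dictionary: for $X\in\cu^{t=0}$ only $X^{\tau=0}=X$ is nonzero, so $X\in\cu^{w\le i}$ iff $W_{\le i}X=X$ iff $Gr_j X=0$ for all $j>i$, and dually for $\cu^{w\ge i}$.

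The only genuinely non-formal point is the direction $\Rightarrow$ of I2; everything else is either recalled from the proof of Theorem~\ref{transmain} or follows from the exactness of the weight-filtration functors via the Serre dictionary. The delicate issue there is that $t$-truncation is not triangulated, so one cannot simply apply $\tau$ to a weight triangle; instead the weight bound must be routed through the long exact $t$-cohomology sequences, which is precisely where the Serre property (hence the exactness established in part~II, cf. the relevant results of \cite{bvk}) is indispensable.
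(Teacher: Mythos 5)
Your proposal is correct, and it runs on the same engine as the paper's proof: the exactness and idempotency of the functors $W_{\le i}$, $W_{\ge i+1}$ supplied by Theorem \ref{transmain}(iv'), the splitting $\cu^{w=0}=\bigoplus_{j}\obj\au_j[-j]$ of Remark \ref{rtf}(2), the envelope statement of Lemma \ref{lbas}(5), and the resulting closure of $\cu^{w\le l}\cap\hrt$ under subobjects, quotients and extensions, propagated through long exact $t$-cohomology sequences. The differences are organizational. For part II you prove the adjunction by hand, applying $\cu(M,-)$ and $\cu(-,N)$ to a nice decomposition, instead of quoting (as the paper does) the adjoint $b_i$ of Lemma \ref{lwf}(II3), i.e. Proposition E.15.1 of \cite{bvk}; this makes your argument more self-contained. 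One justification there is off: weight orthogonality kills $\cu(M,W_{\ge i+1}X)$ but \emph{not} $\cu(M[1],W_{\ge i+1}X)$, since $M[1]$ lies only in $\cu^{w\le i+1}$, which is not orthogonal to $\cu^{w\ge i+1}$; that group instead vanishes by $t$-orthogonality ($M[1]\in\cu^{t\le -1}$ while $W_{\ge i+1}X\in\cu^{t\ge 0}$), and the dual adjunction needs the same correction --- an easy fix that does not affect anything downstream. For part I you reverse the paper's order: you prove I2 first, by induction over the weight-Postnikov tower of $X\in\cu^{[n,i]}$ (base case read off the splitting of $\cu^{w=l}$, inductive step from the Serre property), and then deduce I1 formally; the paper runs the analogous induction directly for I1 (reducing the action of the $t$-truncations to that of $\tau_{=j}$ on $\cu^{w=0}$) and then deduces I2 from I1 via nice decompositions of objects of $\cu^{t=j}$. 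Both orders cost the same; yours has the small advantage that I2 --- the criterion actually cited later, in Remark \ref{rluc}(2) --- is obtained without a detour. Part III coincides with the paper's argument (exactness of composites plus the two short exact sequences computing $Gr_i$ and $Gr_i'$).
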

\begin{proof}

I1. Since $t$ is bounded, it suffices to verify a similar statement for the functors $\tau_{=j}$ for all $j\in \z$ (since the functors mentioned in the assertions could be obtained from these functors via 'extensions'). 

Lemma \ref{lbas}(5) allows  reducing the latter statement to its analogue for  $\cu^{w=0}$ (and all $j\in \z$). 
Indeed, note that  for any $l\in \z$ the functor $W_{\le l}$ (see condition (iv') of Theorem \ref{transmain}) is idempotent and exact; hence the class of objects
of $\au_{\le l}=W_{\le l}\hrt$  contains all subobjects and factor-objects of its elements (in $\hrt$). Thus the long exact sequences coming from applying $\tau_{=j}$ to the $\cu$-extensions  given by Lemma \ref{lbas}(5) yields the reduction in question (by induction; here we also use Remark \ref{rtf}(3)). %???!!

Lastly,
% as shown in the proof of   Theorem \ref{transmain},
by Remark \ref{rtf}(2) 
we have $\cu^{w=0}=\bigoplus_{j\in \z}\obj \au_j[-j]$; the result follows immediately.

%laja!!!

2. By the previous assertion, it suffices to verify the statement for $X\in \cu^{t=j}$. Then the fact is immediate from the statement that  'nice' filtrations of $X[j]$ yield its %weight 
nice decompositions. % (as in (\ref{ewd}).

II1. As noted in the proof of Theorem \ref{transmain}, this functor is the restriction to $\hrt$ of the functor $b_i$ that is right adjoint to the embedding $\cu_{\le i}\to \cu$. The result follows immediately.

2. Dual to the previous assertion (see Remark \ref{rtf}(3)). 

III All of these assertions are easy consequences of the existence of a weight filtration for $\hrt$ (see Definition \ref{dwfilt} below).

 The functors $Gr_i$ and $Gr_i'$ are exact as compositions of exact functors. They obviously take their values in $\aui$ are are identical on it; hence they are idempotent.  
 
 Now, we can compute $Gr_i$ using the following functorial short exact sequence
 \begin{equation}\label{egri}
 0\to W_{\le i-1}X\to W_{\le i}X\to Gr_i X\to 0; 
\end{equation}
we also consider its dual 
\begin{equation}\label{egrpi}
 0\to Gr'_i X\to W_{\ge i}X\to W_{\ge i+1}X\to 0. 
\end{equation}
 We obtain that both of  $Gr_i$ and $Gr'_i$  kill $\hrt\cap \cu^{w\ge i+1}$ and  $\hrt\cap \cu^{w\le i-1}$.
Since any object of $\hrt$ could be presented as an extension of an object of $\aui$ by that of $\hrt\cap \cu^{w\ge i+1}$ and  $\hrt\cap \cu^{w\le i-1}$, we obtain that $Gr_i\cong Gr'_i$, whereas (\ref{egri}) yields the last statement in assertion III1.

Next, (\ref{egri}) yields that $Gr_i X=0$ whenever $W_{\ge i+1}X\cong W_{\ge i}X$; (\ref{egrpi}) also yields that this is equivalent to $W_{\le i}X\cong W_{\le i-1}X$. Thus we obtain assertion III2.

\end{proof}

\begin{rema}\label{rluc}

 1. %As we have already noted in the proof, part I1 of the proposition is equivalent to: 
 So  $\tau_{\le i}$ and $\tau_{\ge i}$ preserve $\cu^{w=0}$. %Indeed??!!

%These (equivalent) statements are 
This statement is somewhat weaker than the transversality of weight and $t$-structures (in contrast to  condition (iv) of Theorem \ref{transmain} where $t$ and $w$ are 'permuted'), since %they do not 
it does not imply the semi-simplicity of the corresponding $\au_i$. For example, let $\au$ be a non-semi-simple abelian category %with enough projective objects
such that any object of $\au$ has finite projective dimension; then $\cu=D^b(\au)\cong K^b(Proj \au)$. Then we can consider the 'stupid' weight structure on $\cu$ whose heart is $Proj \au$. Certainly, $\hw$ is preserved by the truncations with respect to the canonical $t$-structure for $\cu$. Yet if we put $\au_0= Proj \au$ and all other $\aui=0$, non-projective objects of $\hrt\cong \au$ wouldn't have filtration by objects of $\au_i$ (i.e. by projective ones). 

One could  also consider the direct sum of a collection of ('shifted') examples of this sort in order to get more than one non-zero $\aui$.

2. For %$\au_{\ge l}$ being the categorical image of the functor $W_{\ge l}(-):\hrt\to \hrt$
%and $\au_{\le l-1}$ being its kernel (for all $l\in \z$) 
$\au_{\le l}$ as in the proof of assertion I1, and $\au_{\ge l+1}$ being the categorical kernel of $W_{\le l}(-):\hrt\to \hrt$
we can re-formulate assertion I2 as follows:   $X\in \cu^{w\ge 0}$ (resp. $X\in \cu^{w\le 0}$) whenever $X^{\tau=i}\in \au_{\ge i}$ (resp. $X^{\tau=i}\in \au_{\le i}$) for all $i\in \z$ (for $X\in \obj \cu$). %??? Define?? 2.  %Part I1 of the proposition is

This statement corresponds to the definition of weights for mixed Hodge complexes (by Deligne) and for complexes of mixed Hodge modules (by Saito); see \S\ref{shodge} below. For Artin-Tate motives over a number field this result was established in Theorem 3.8 of \cite{wildat}. 
\end{rema}

We are also able to prove a simple formula for the Grothendieck group of $\cu$. 

\begin{pr}\label{pkzt}
Define  $K_0(\cu)$ as a group whose generators are $[C],\ C\in \obj \cu$; if
$D\to B\to C\to D[1]$ is a distinguished triangle then we set
$[B]=[C]+[D]$.

Let $\cu$ possess transversal $t$ and $w$. Then $K_0(\cu)$ is a free abelian group with a basis %formed 
indexed by isomorphism classes of indecomposable objects of all of $\aui$ (for $i$ running through all integers).
\end{pr}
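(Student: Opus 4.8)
The plan is to reduce the computation to the (split) Grothendieck group of the heart $\hw$ of the weight structure, and then to exploit the decomposition of $\hw$ furnished by transversality. I would first invoke the general principle, valid for any bounded weight structure (cf. \cite{bws}): the assignment $Y\mapsto [Y]$, for $Y\in\obj\hw\subset \obj\cu$, extends to an isomorphism $K_0(\hw)\xrightarrow{\sim}K_0(\cu)$, where $K_0(\hw)$ denotes the Grothendieck group of the \emph{additive} category $\hw$, i.e. the free abelian group on isomorphism classes of objects of $\hw$ modulo $[Y_1\oplus Y_2]=[Y_1]+[Y_2]$. The map is well defined since a split triangle $Y_1\to Y_1\oplus Y_2\to Y_2$ forces $[Y_1\oplus Y_2]=[Y_1]+[Y_2]$ in $K_0(\cu)$ as well; its inverse is given by the weight complex functor $\cu\to K^b(\hw)$ composed with the standard identification $K_0(K^b(\hw))\cong K_0(\hw)$, using that for bounded $w$ the class $[X]$ is the alternating sum of the classes of the pure weight factors of $X$. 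Whether I cite this from \cite{bws} or reprove it along these lines, it reduces the problem to describing $K_0(\hw)$.

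Next I would determine the indecomposable objects of $\hw$ and show that $\hw$ is Krull--Schmidt. By Remark \ref{rtf}(2), every $X\in\cu^{w=0}=\obj\hw$ satisfies $X\cong\bigoplus_{i\in\z}\tau_{=i}X$, a finite direct sum (boundedness of $t$) with $\tau_{=i}X\in\obj\aui[-i]$; moreover $\hw$ is idempotent complete, as shown in the proof of Theorem \ref{transmain}, since each $\aui$ is. Because each $\aui$ is abelian semi-simple, every object of $\aui[-i]$ is a finite direct sum of objects $S[-i]$ with $S$ a simple (equivalently, indecomposable) object of $\aui$, and $\en_{\hw}(S[-i])\cong\en_{\aui}(S)$ is a division ring by Schur's lemma, hence local; thus each $S[-i]$ is indecomposable. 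These are pairwise non-isomorphic across distinct pairs $(i,S)$: an isomorphism $S[-i]\cong S'[-j]$ with $i\neq j$ would give $S\cong S'[i-j]$, impossible since $\cu^{t=0}\cap\cu^{t=j-i}=\ns$. Consequently $\hw$ is a Krull--Schmidt category whose indecomposables are exactly the objects $S[-i]$, and its decompositions into indecomposables are unique up to permutation and isomorphism; in particular, an indecomposable $X\in\obj\hw$ has exactly one non-zero $\tau_{=i}X$, which is simple in $\aui$.

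It follows that $K_0(\hw)$ is free abelian on the set of isomorphism classes of the $S[-i]$, i.e. on $\bigsqcup_{i\in\z}\{\text{iso classes of indecomposables of }\aui\}$. Transporting this basis through $K_0(\hw)\cong K_0(\cu)$ and using $[S]=(-1)^i[S[-i]]$ in $K_0(\cu)$ (from $[Y[1]]=-[Y]$), one sees that $\{[S]:\ S\text{ indecomposable in }\aui,\ i\in\z\}$ is, up to signs, the image of this basis, hence itself a basis of $K_0(\cu)$; this is exactly the asserted description. The main obstacle I anticipate lies not in this bookkeeping but in the first isomorphism $K_0(\hw)\cong K_0(\cu)$: one must verify cleanly that the weight complex construction genuinely identifies $K_0(\cu)$ with the additive $K_0$ of the heart of a bounded weight structure (in particular that weight complexes which are contractible, or more generally homotopy equivalent, contribute trivially). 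Once that standard input is secured, the transversal splitting of $\hw$ does the rest.
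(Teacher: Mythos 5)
Your proposal is correct, and its first half coincides with the paper's proof: both verify that $\hw$ is idempotent complete (via Lemma \ref{lbas}(6), since all the $\aui[-i]$ are) and then invoke Theorem 5.3.1 of \cite{bws} to identify $K_0(\cu)$ with the split Grothendieck group $K_0(\hw)$; both then exploit the splitting $\cu^{w=0}=\bigoplus\obj\aui[-i]$ of Remark \ref{rtf}(2). Where you genuinely diverge is in the computation of $K_0(\hw)$. The paper never mentions Krull--Schmidt: it considers the functor $\bigoplus_i\aui[-i]\to\hw$, notes that the induced map on split $K_0$-groups is surjective (because the functor is surjective on objects, by the splitting), and concludes injectivity because this map admits a retraction, namely $K_0(\bigoplus_{i\in\z}\tau_{=i})$; freeness then follows from freeness of each $K_0(\aui)$, which holds by semi-simplicity. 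You instead classify the indecomposables of $\hw$ as the shifts $S[-i]$ of simple objects, check via Schur's lemma that their endomorphism rings are division rings (hence local), and appeal to Krull--Remak--Schmidt--Azumaya uniqueness of decompositions to get freeness of the split monoid. Both arguments are valid. The paper's retraction trick is the more elementary one: the functorial splitting $X\mapsto\bigoplus_{i\in\z}\tau_{=i}X$ does exactly the work that uniqueness of decompositions does for you, so Azumaya's theorem is never needed. Your route costs that extra input but buys strictly more: it identifies the indecomposable objects of $\hw$ and shows that $\hw$ is a Krull--Schmidt category, which the paper's proof does not establish. Your final sign bookkeeping $[S]=(-1)^i[S[-i]]$ is harmless, since rescaling basis elements by $\pm 1$ preserves a basis.
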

\begin{proof}
As we have already noted, $\hw$ is idempotent complete since all $\aui[-i]$ are (by Lemma \ref{lbas}(6)). By Theorem 5.3.1 of \cite{bws} we obtain that $K_0(\cu)\cong K_0(\hw)$. Recall that here the Grothendieck group  of $\hw$  is defined as a group whose generators are of the form $[X],\
X\in\obj \hw$; the relations are 
$[X]=[Y]+[Z]$ if 
 $X\cong Y\bigoplus Z$ for 
$X,Y,Z\in\obj \hw$. 

Now, obviously %the Grothendieck group of 
$K_0(\aui[-i])\cong K_0(\aui)$ is a free abelian group  
with a basis formed by isomorphism classes of indecomposable objects of $\aui$ (for any fixed $i\in \z$). 
Besides, we have a functor $\bigoplus \aui[-i]\to \hw$; it induces a surjection of $K_0$-groups since it is surjective on objects. This surjection is also injective since it is split by the map $K_0(\bigoplus_{i\in \z}\tau_{=i})$.

\end{proof}

 \section{Examples of transversality}

In this section we construct several examples of transversal weight and $t$-structures. In \S\ref{sdau} to this end we prove a certain general statement for $D^b(\au)$. In \S\ref{s1mot} we study the $1$-motivic examples of our setting (and relate them with the Chow weight structure for Voevodsky's $\dmgm$, whereas in \S\ref{shodge} we study certain Hodge-theoretic examples. Lastly, in \S\ref{sconj}
 we describe the conjectural relations between various 'structures' for $\dmgm$.

\subsection{On the main series of examples}\label{sdau}

A result from \cite{bvk} yields:  certain conditions on  $\au$ ensure  for  $D^b(\au)$ the existence of  a weight structure transversal to the canonical $t$-structure. We will use this statement for all the (main) examples in this paper.

%provides us with a large family of examples of transversal weight and $t$-structures.

\begin{pr}\label{pdab}
%If $\au$ has a weight filtration in the sense of Definition E7.2 \cite{bvk}, i.e. ,
Suppose that $1_{\au}$ is equipped with a separated exhaustive increasing system of exact
subfunctors $W_{\le i}$. For all $i\in \z$ denote the categorical kernel of the restriction of $W_{\le i-1}$ to the image of $W_{\le i}$ by $\au_i$.

Suppose that all of $\au_i$ are semi-simple. %, and $\au_i\perp \au_{j}$ for $j>i$. % for all $j\neq i$. %finite on every object???!!!
Then $\au_i$ yield a strongly semi-orthogonal generating system in $D^b(\au)$, and the corresponding weight structure is transversal to the canonical $t$-structure for $D^b(\au)$.
\end{pr}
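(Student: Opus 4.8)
The plan is to reduce everything to condition (i) of Theorem \ref{transmain}, regarding each $\au_i$ as a full subcategory of $\cu=D^b(\au)$ via the embedding $\au_i\subset\au=\hrt$ into the heart of the canonical $t$-structure. Concretely $\au_i$ is the full subcategory of those $A\in\au$ with $W_{\le i}A=A$ and $W_{\le i-1}A=0$, i.e.\ the objects ``pure of weight $i$''; semi-simplicity of these is granted by hypothesis. Because the $W_{\le\bullet}$ are exhaustive and separated, every object of $\au$ carries a finite weight filtration with graded pieces $\gr_i\in\au_i$, hence lies in the envelope of $\cup_i\obj\au_i$; since $\lan\au\ra=D^b(\au)$, the family $\{\au_i\}$ is generating. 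So it remains to verify strong semi-orthogonality.

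First I would establish the vanishing of $\operatorname{Hom}_{\au}(A,B)$ for $A\in\au_i$, $B\in\au_j$ with $i\ne j$; this supplies both the $s=0$ part of semi-orthogonality (when $i<j$) and the extra condition needed for \emph{strong} semi-orthogonality (when $i>j$). The argument uses only exactness of the subfunctors $W_{\le\bullet}$: each $W_{\le k}$, being an exact subfunctor of $1_{\au}$, commutes with passage to subobjects ($W_{\le k}C=C\cap W_{\le k}B$ for $C\hookrightarrow B$) and to quotients ($W_{\le k}C=\imm(W_{\le k}A\to C)$ for $A\twoheadrightarrow C$). For $f\colon A\to B$ put $C=\imm f$. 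If $i<j$ then $W_{\le i}C=C$ (from $A$, as $W_{\le i}A=A$) while $W_{\le i}C=C\cap W_{\le i}B=0$ (from $B$, as $W_{\le i}B\subset W_{\le j-1}B=0$); hence $C=0$ and $f=0$. The case $i>j$ is symmetric, applying $W_{\le i-1}$ instead.

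The main obstacle is the remaining part of semi-orthogonality, the higher vanishing $\operatorname{Ext}^s_{\au}(A,B)=\cu(A,B[s])=0$ for $A\in\au_i$, $B\in\au_j$ and $s>i-j$, $s\ge1$. The case $s=1$ is again clean from exactness: given $0\to B\to E\to A\to0$ with $i\le j$, applying $W_{\le i}$ identifies $W_{\le i}E$ with $A$ (since $W_{\le i}B=0$ when $i<j$), which produces a splitting; when $i=j$ one applies both $W_{\le i}$ and $W_{\le i-1}$ to conclude $E\in\au_i$ and then splits by semi-simplicity. For $s\ge2$ the naive dimension-shifting argument is obstructed by the absence of filtration-compatible injectives in $\au$, and this is precisely where I would invoke the cited result of \cite{bvk}: an exact filtration of the identity with semi-simple graded pieces equips $D^b(\au)$ with a weight filtration in the sense of Definition E17.1 of \cite{bvk}, and by the dictionary underlying Lemma \ref{lwf} such a weight filtration is the same datum as a semi-orthogonal generating family $\{\lan\au_i\ra\}$, which yields exactly the required $\operatorname{Ext}$-vanishing.

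Granting these verifications, $\{\au_i\}$ satisfies condition (i) of Theorem \ref{transmain}, so $\cu$ carries transversal structures $w,t$, and the first assertion of the Proposition (strong semi-orthogonality and generation) is precisely what was checked. It then remains to identify the $t$ so produced with the canonical one: by condition (ii$'$) its heart is the envelope of $\cup_i\obj\au_i$, which by the generation discussion above equals $\au$; since a bounded $t$-structure is determined by its heart, this $t$ is the canonical $t$-structure, and the associated $w$ is the asserted transversal weight structure.
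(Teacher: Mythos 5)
Your overall strategy (reduce to condition (i) of Theorem \ref{transmain}), your generation argument, and your direct proofs of the $s=0$ and $s=1$ vanishings via exactness of the subfunctors $W_{\le\bullet}$ are all correct; they are in fact more self-contained than the paper's own proof, which disposes of \emph{all} the orthogonality statements at once by citing Remark E7.8, Lemma E7.5 and Proposition E7.4(4) of \cite{bvk}. Your closing step identifying the $t$-structure produced by Theorem \ref{transmain} with the canonical one (a bounded $t$-structure is determined by its heart) is also fine and is left implicit in the paper.

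The gap is in your treatment of $\operatorname{Ext}^s$ for $s\ge 2$, which is exactly where the substance of the Proposition lies. First, there is a circularity: to say that the filtration ``equips $D^b(\au)$ with a weight filtration in the sense of Definition E17.1 of \cite{bvk}'' you need $\lan\au_l\ra\perp\lan\au_m\ra$ for $l<m$, i.e.\ $\operatorname{Ext}^s_{\au}(A,B)=0$ for $A\in\au_l$, $B\in\au_m$, $l<m$ and \emph{all} $s$ --- but you have established this only for $s\le 1$; the cases $s\ge 2$ are part of what is being proved. Second, the claimed ``dictionary'' is false: the datum of a generating family of triangulated subcategories with $\cu_l\perp\cu_m$ for $l<m$ (equivalently, with the adjoints of Lemma \ref{lwf}(II)) records \emph{no} information about $\operatorname{Hom}(\au_i,\au_j[s])$ when $i\ge j$, whereas Definition \ref{dsosc} demands $\operatorname{Ext}^s(A,B)=0$ for all $s>i-j$ --- in particular $\operatorname{Ext}^{s}(A,B)=0$ for $s\ge 1$ within a single weight. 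For instance, the trivial one-step filtration on $D^b(\au)$ for a non-semi-simple $\au$ is a perfectly good filtration in the E17.1 sense, yet its single piece is not semi-orthogonal; so no formal equivalence of data can exist, and Lemma \ref{lwf} indeed only goes in the direction semi-orthogonality $\Rightarrow$ filtration. The missing statement is precisely Lemma E7.5 of \cite{bvk}, and it genuinely uses semi-simplicity; one way to prove it is by induction on $s$: write a class $e\in\operatorname{Ext}^s(A,B)$ ($A\in\au_i$, $B\in\au_j$, $s>i-j$) as a Yoneda product $e'\cdot e''$ with $e''\in\operatorname{Ext}^1(A,C)$, $e'\in\operatorname{Ext}^{s-1}(C,B)$; your $s=1$ case plus d\'evissage on the weight filtration of $C/W_{\le i-1}C$ shows that $\operatorname{Ext}^1(A,W_{\le i-1}C)\to\operatorname{Ext}^1(A,C)$ is surjective, so $e=(\iota^*e')\cdot\tilde e''$ with $\iota^*e'\in\operatorname{Ext}^{s-1}(W_{\le i-1}C,B)$, and the latter group vanishes by the inductive hypothesis and d\'evissage because every weight $k$ of $W_{\le i-1}C$ satisfies $s-1>(i-1)-j\ge k-j$. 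Either supply such an argument or cite Lemma E7.5 of \cite{bvk} directly, as the paper does.
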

\begin{proof}
$\{\au_i\}$ are obviously generating.
The orthogonality statements required are immediate by applying  Remark E7.8,  Lemma E7.5, and Proposition E7.4(4) of ibid.
\end{proof}

\begin{rema}\label{rfun}

1. There is a funny way to make a series of new examples of transversal weight and $t$-structures from one given example.

Suppose that a category $\cu$ with a bounded weight structure $w$ admits a {\it differential graded enhancement} 
(see Definition 6.1.2 of \cite{bws}; note that one can easily find an enhancement for $D^b(\au)$ for any small $\au$, 
since a localization of an enhanceable category is enhanceable). Then for any $N\ge 0$ there exist a triangulated category $\cu_N$ and an exact {\it truncation functor} $t_N:\cu\to \cu_N$ such that: $\cu=\lan t_N(\cu^{w=0})\ra$; and $\cu_N(t_N(A),t_N(B))$ for  $A\in \cu^{w=i}$, $B\in \cu^{w=0}$ ($i\in \z$) is zero for 
for $i>N$ and $i<0$, and is isomorphic via $t_N$ to $\cu(A,B)$ for  $0\le i\le N$ (see Remark 6.2.2 and \S6.3 of \cite{bws}).
 In particular, for $N=0$ we obtain the {\it strong weight complex} functor $t:\cu\to \cu_0\cong K^b(\hw)$; see loc.cit.

 We obtain: for a strongly semi-orthogonal generating family of  semi-simple $\{\au_i\subset \cu\}$ (and the corresponding $w$) 
 the family $\{t_N(\aui)\}$ is also strongly semi-orthogonal and generating in $\cu_N$ (since for any $i,j,s\in \z$, $X\in \obj\aui,\ Y\in \obj \au_j$ 
 the group $\cu_N(t_N(X),t_N(Y)[s])$ is either $0$, or is isomorphic to $\cu(X,Y[s])$); moreover, $t_N(\aui)$ are semi-simple. 
 
 So, using any of the examples (of transversal $t$ and $w$) described below, one obtains the existence of transversal $w$ and $t$ for all the corresponding $K^b(\hw)$ 
 and also for their 'higher' analogues (i.e the corresponding $\cu_N$ for $N>0$). In particular, the 'motivic' conjectures imply the existence a $t$-structure
  transversal to the 'stupid' weight structure (the latter is the 'simplest' weight structure with the heart $=\chow$, that 
  corresponds to the stupid truncations of complexes; see \S1.1 of \cite{bws}) for $K^b(\chow)$; this is true unconditionally for 
  the '$1$-motivic' part of this category. % (over any smooth variety $S/k$).??!! Not the same category for $S\neq \spe k$??!!
 
 The author doubts that $\cu_N$ for $\cu=D^b(\au)$ (as in the proposition) are always isomorphic to the derived categories of the
  corresponding $\hrt_N$; in any case, this construction surely produces some 'new' examples from the ones that we will describe below.

2. Below we will  consider several motivic and 'Hodge-theoretic' examples of our setting. All the motives, Hodge structures, complexes, and 
modules, and connecting functors between them that we will consider below will %(by default)
 have rational coefficients. This is because the results of this paper cannot be applied (directly) to motives %and $1$-motives 
 with integral coefficients. Indeed, even the category of finitely generated abelian groups (the 'easiest' part of motives of weight zero) 
 is not semi-simple. %Yet abelian categories of cohomological dimension $1$ are easy to assemble into a 'filtered $t$-structure'??!!!

Note also that people usually do not expect Voevodsky's motives with integral coefficients to possess a 'reasonable' motivic $t$-structure 
(see \S3.4 and Proposition 4.3.8 of \cite{1}).

\end{rema}

\subsection{\texorpdfstring{$1$}{1}-motives; their relation with Voevodsky's motives (endowed with the Chow weight structure)}\label{s1mot} 

First we consider the triangulated category $\dom$ of $1$-motives. % (over $k$).

\begin{pr}\label{pomfil}
Let $S$ be connected %??!!
and regular essentially of finite type over $k$. Then the category $\dom(S)$ (the derived category of Deligne's $1$-motives over $S$; see Appendix C.12 of \cite{bvk}) is equipped with a weight structure $w_1$ that is transversal to the canonical $t$-structure for it.

\end{pr}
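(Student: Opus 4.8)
The plan is to deduce this directly from Proposition \ref{pdab}, applied to the abelian category $\au=\mgo(S)$ of Deligne's $1$-motives over $S$ taken up to isogeny (i.e.\ with rational coefficients). First I would recall, following Appendix C.12 of \cite{bvk}, that $\dom(S)$ is by definition (canonically equivalent to) the bounded derived category $D^b(\mgo(S))$, and that its canonical (``motivic'') $t$-structure is precisely the standard $t$-structure on this derived category, whose heart is $\mgo(S)$. Hence it suffices to equip the identity functor $1_{\mgo(S)}$ with a separated exhaustive increasing system of exact subfunctors whose successive factors are semi-simple, and then quote Proposition \ref{pdab}.

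Next I would write down the weight filtration on $\mgo(S)$. A $1$-motive $M$ over $S$ is a two-term complex $[L\to G]$ with $L$ a lattice placed in degree $-1$ and $G$ a semi-abelian $S$-scheme in degree $0$; presenting $G$ as an extension of an abelian $S$-scheme $A$ by a torus $T$, the weight filtration is
\[
W_{\le -2}M=[0\to T]\subset W_{\le -1}M=[0\to G]\subset W_{\le 0}M=M,
\]
with $W_{\le i}M=0$ for $i<-2$ and $W_{\le i}M=M$ for $i\ge 0$. The key input here, which I would take from \cite{bvk}, is that these assignments are functorial in $M$ and that the resulting subfunctors $W_{\le i}\subset 1_{\mgo(S)}$ are \emph{exact} (this is exactly the assertion that $\{W_{\le i}\}$ is a weight filtration on $\mgo(S)$ in the sense of Definition E17.1 of \cite{bvk}); the filtration is manifestly increasing, separated and exhaustive.

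Finally I would identify the factors $\au_i$ (the categorical kernels of $W_{\le i-1}$ restricted to the image of $W_{\le i}$, in the notation of Proposition \ref{pdab}): one gets $\au_{-2}\cong$ the category of $S$-tori up to isogeny, $\au_{-1}\cong$ the category of abelian $S$-schemes up to isogeny, and $\au_0\cong$ the category of lattices over $S$ up to isogeny, all other $\au_i$ being zero. It then remains to verify that each of these three categories is abelian semi-simple, and this is where the real work lies. For $\au_{-2}$ and $\au_0$ I would use that, since $S$ is connected and regular, tori (resp.\ lattices) up to isogeny are governed by finite-dimensional $\q$-linear representations of $\pi_1^{et}(S)$ that factor through a finite quotient, and such representations are semi-simple by Maschke's theorem (the relevant group order being invertible in $\q$). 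The hard part will be the semi-simplicity of $\au_{-1}$, which is a relative form of Poincaré's complete reducibility theorem for abelian schemes over $S$; I would invoke the corresponding result of \cite{bvk}. Once all three factors are seen to be semi-simple, Proposition \ref{pdab} yields at once that $\{\au_i\}$ is a strongly semi-orthogonal generating family in $\dom(S)$ and that the associated weight structure $w_1$ is transversal to the canonical $t$-structure, as required.
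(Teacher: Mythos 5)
Your proposal is correct and follows essentially the same route as the paper: the paper's proof consists of applying Proposition \ref{pdab} to $\au=\mgo(S)$, citing Proposition C12.1 of \cite{bvk} as the statement that this category (up to isogeny) carries an exact, separated, exhaustive weight filtration with semi-simple factors. Your write-up merely unpacks the content of that citation (the explicit filtration $[0\to T]\subset[0\to G]\subset M$, the identification of the graded pieces with tori, abelian schemes and lattices up to isogeny, and the semi-simplicity inputs), which is exactly what the reference supplies.
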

\begin{proof}

By Proposition %15.2.1??
C12.1 of %\cite{bvk},
ibid., the category $\mgo=\au$ satisfies the conditions of Proposition \ref{pdab}.
\end{proof}

%\subsection{Voevodsky's motives: the Chow weight structure, and comparison functors}

Next we set $S=\spe k$ and recall that  the category $\dmge(\subset \dmgm)$ of effective geometric Voevodsky's motives over $k$ possesses a certain {\it Chow} weight structures whose heart is $\chowe$; see \S6.6 of \cite{bws}.

\begin{pr}\label{pcomp}
1. The embedding $T:\dom\to\dmge$ defined in Theorem 2.1.2 of \cite{bvk} is weight-exact (i.e. $T(\dom^{w_1\le 0})\subset \dmge^{\wchow\le 0}$ and  $T(\dom^{w_1\ge 0})\subset \dmge^{\wchow\ge 0}$).

2. The functor $\lalb:\dmge\to \dom$ introduced in Definition 5.2.1 of ibid. is weight-exact also, as well as $\rpic$ (see Definition 5.3.1 of ibid.; since $\rpic$ is contravariant, here weight-exactness means that $\rpic(\dmge^{\wchow\le 0})\subset \dom^{w_1\ge 0}$, and $\rpic(\dmge^{\wchow\ge 0})\subset \dom^{w_1\le 0}$). %Use Corollary 5.3.2: duality??!!

\end{pr}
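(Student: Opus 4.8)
The plan is to reduce all three weight-exactness claims to a computation on hearts. Recall that, since $\wchow$ and $w_1$ are bounded, Lemma \ref{lbas}(5) presents $\dmge^{\wchow\le 0}$ (resp. $\dmge^{\wchow\ge 0}$) as the envelope of the shifts $\chowe[j]$, $j\le 0$ (resp. $j\ge 0$), and likewise for $w_1$; combined with the extension-stability of the classes $\du^{w\le 0},\du^{w\ge 0}$ this shows that an exact functor between triangulated categories carrying bounded weight structures is weight-exact if and only if it maps the heart of the source into the heart of the target. Thus Part 1 amounts to $T(\dom^{w_1=0})\subseteq\chowe$, while Part 2 amounts to $\lalb(\chowe)\subseteq\dom^{w_1=0}$ and $\rpic(\chowe)\subseteq\dom^{w_1=0}$ (for the contravariant $\rpic$ the same reduction applies, using $\rpic(X[1])\cong\rpic(X)[-1]$).

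For Part 1 I would invoke the splitting $\dom^{w_1=0}=\bigoplus_i\obj\au_i[-i]$ of Remark \ref{rtf}(2), where the $\au_i=\dom^{t=0}\cap\dom^{w_1=i}$ are, by Proposition \ref{pomfil} and the construction of Proposition \ref{pdab}, the semi-simple graded pieces of Deligne's weight filtration on $\mgo$: namely $\au_0$ the lattices (Artin motives), $\au_{-1}$ the abelian varieties up to isogeny, and $\au_{-2}$ the tori up to isogeny. By additivity it suffices to check $T(\au_i)\subseteq\dmge^{\wchow=i}=\chowe[i]$ for each $i$, and this is where the explicit description of $T$ from \cite{bvk} enters. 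A lattice is sent to the corresponding Artin motive, an effective Chow motive, so it lands in $\chowe=\dmge^{\wchow=0}$; the Tate $1$-motive $[0\to\mathbb{G}_m]$ is sent to $\z(1)$, which lies in $\dmge^{\wchow=-2}$ because $\z(1)[2]$ is the Lefschetz motive, an object of $\chowe$; and $[0\to A]$ is sent to the motive attached to $H_1(A)$, which by the same Tate-twist/shift bookkeeping lies in $\dmge^{\wchow=-1}=\chowe[-1]$. Matching these three computations against the indices $0,-1,-2$ gives $T(\au_i)\subseteq\dmge^{\wchow=i}$, hence $T(\dom^{w_1=0})\subseteq\chowe$.

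For Part 2 I would combine a formal half with an explicit one. Formally, $\lalb$ is the left adjoint of the embedding $T$, so from the weight-exactness of $T$ proved in Part 1 one obtains one of the two required inclusions for $\lalb$ by a standard orthogonality argument: for $M\in\dmge^{\wchow\le 0}$ and $N\in\dom^{w_1\ge 1}$ we have $\dom(\lalb M,N)\cong\dmge(M,TN)=0$, since $TN\in\dmge^{\wchow\ge 1}$ and $\dmge^{\wchow\le 0}\perp\dmge^{\wchow\ge 1}$, whence $\lalb M\in {}^\perp(\dom^{w_1\ge 1})=\dom^{w_1\le 0}$ (the last equality holding because $\dom^{w_1\le 0}$ is retraction-closed). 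The remaining inclusion, and the whole statement for the contravariant $\rpic$, I would read off the explicit formulas of \cite{bvk} for $\lalb(M(X))$ and $\rpic(M(X))$ with $X$ smooth projective: these motivic Albanese and Picard complexes have homology $1$-motives that are pure of the expected weight, so the total objects lie in $\dom^{w_1=0}=\bigoplus_i\obj\au_i[-i]$; as $\rpic$ is obtained from $\lalb$ through the dualities on $\dom$ and $\dmge$ (which exchange $w\le 0$ and $w\ge 0$), its contravariant weight-exactness follows in parallel. Since the $M(X)$ Karoubi-generate $\chowe$ and $\dom^{w_1=0}$ is retraction-closed, this yields $\lalb(\chowe),\rpic(\chowe)\subseteq\dom^{w_1=0}$.

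The main obstacle throughout is the bookkeeping: matching Deligne's weight filtration on $1$-motives, which makes no reference to Chow motives, against the Chow-weight grading, i.e. verifying that the Tate twists and cohomological shifts built into the normalizations of $T$, $\lalb$, $\rpic$ in \cite{bvk} carry weight-$i$ $1$-motivic data exactly into $\dmge^{\wchow=i}$. The abelian-variety piece $\au_{-1}$ is the delicate case, since $T([0\to A])$ must land in the shifted copy $\chowe[-1]$ rather than in $\chowe$ itself; getting this shift right, and dually checking the purity of $\lalb(X),\rpic(X)$ for smooth projective $X$, is the real content of the proof. The semi-simplicity of the $\au_i$ (Poincar\'e reducibility for abelian varieties, together with the semi-simplicity of Artin motives and of tori up to isogeny over $k$) is precisely what makes the transversal framework of Theorem \ref{transmain} applicable here in the first place.
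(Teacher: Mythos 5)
Your proposal is correct and, at its core, it is the paper's own argument: both proofs reduce weight-exactness to the statement that hearts map into hearts (legitimate because $w_1$ and $\wchow$ are bounded, via Lemma \ref{lbas}(5) and extension-stability, exactly as you say), verify this for $T$ on the pure graded pieces $\aui$ of the $1$-motivic weight filtration (lattices, abelian varieties, tori), verify it for $\lalb$ by computing $\lalb(M(P))$ for $P$ smooth projective (the paper cites Corollary 10.2.3 and Lemma 16.1.1 of \cite{bvk} for the purity of $L_i\alb(P)[i]$, $i=0,1,2$), and handle $\rpic$ by duality. Your shift bookkeeping $T(\aui)\subset\dmge^{\wchow=i}$ together with $\dom^{w_1=0}=\bigoplus\obj\aui[-i]$ is the self-consistent reading of the conventions (it matches Remark \ref{rtf}(2) and the purity facts used in part 2; the paper's line ``$\aui[i]\subset\chowe$'' is an index slip for $\aui[-i]$). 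The one genuinely different ingredient in your write-up is the adjunction argument: since $\lalb$ is left adjoint to $T$, part 1 formally yields $\lalb(\dmge^{\wchow\le 0})\subset{}^\perp(\dom^{w_1\ge 1})=\dom^{w_1\le 0}$; this is sound (the identification of ${}^\perp(\dom^{w_1\ge 1})$ with $\dom^{w_1\le 0}$ uses weight decompositions together with the retraction-closedness you invoke) and attractive because it needs no computation of $L_i\alb$. However, as you yourself observe, a left adjunction can only control morphisms \emph{out of} $\lalb M$, hence only one of the two inclusions; so the explicit computation on smooth projective varieties is still required, and once it is done it gives heart-to-heart and therefore both inclusions, making the formal half logically redundant (though a useful consistency check). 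One step you should make explicit: passing from ``the homology $1$-motives of $\lalb(M(P))$ are pure of the expected weights'' to ``$\lalb(M(P))\in\dom^{w_1=0}$'' uses that the heart of a weight structure is extension-stable (the object need not a priori split as the direct sum of its $t$-homologies); the paper spells this point out.
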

\begin{proof}
1. Since $w_1$ is bounded, it suffices to verify that $T(\dom^{w_1=0})\subset \obj \chowe$.

To this end it suffices to prove (in the notation of Theorem \ref{transmain}) that $\aui[i]\subset \chowe$. This is immediate from the description of $\aui$, that could be immediately obtained from Definition C11.1 of \cite{bvk} along with Lemma 16.1.1 of ibid.
 %???!!!

2. It suffices to verify that $\lalb$ and $\rpic$ map Chow motives into Chow ones.

Now, (for any $X\in \obj\dmge$) $\lalb(X)$ could be obtained from $L_i\alb(X)[i]$ (see Definition 8.1.1 of ibid.) %???!! [-i]??!!
 via extensions (as usual for cohomology coming from a $t$-structure). Since the heart of a weight structure is always extension-stable, it suffices to verify that $L_i\alb[i]$ sends any smooth projective variety $P/k$ into $\chowe$. We have $L_i\alb(P)=0$ for $p\neq 0,1,2$. Moreover,  Corollary 10.2.3 of ibid. immediately implies that  $L_i\alb(P)[i]$ is a Chow motif for $i=0,2$.
 The case $i=1$ is immediate from Lemma 16.1.1 of ibid.

The result for $\rpic$ follows easily, since the functors are interchanged by Poincare %Cartier??!! '$1$-motivic'???!!!
 duality (see Corollary 5.3.2 of ibid.); note that Poincare duality maps Chow motives into Chow ones.
 Alternatively, one could apply Corollary 10.6.1 of ibid. (combined with Lemma 16.1.1 of ibid.).

\end{proof}

\begin{rema}
1. Very probably, an analogue of (at least) part 1 of the proposition  is also fulfilled for motives over any $S$ that is regular and essentially of finite type over $k$. Recall that a certain version of Voevodsky's  $S$-motives (with rational coefficients) was thoroughly studied in \cite{degcis}; a Chow weight structure for this category was introduced in \cite{hebpo} and  in \cite{brelmot}. The main difficulty here is to construct %(some) 
a comparison functor.

2. As shown in \S2 of \cite{bws}, for any (co)homology theory defined on $\cu$ a weight structure $w$ for it yields certain {\it weight filtrations} (cf. %Theorem\ref{transmain}(v,v') above)
Definition \ref{dwfil} below), {\it weight spectral sequences} %(cf. Remark \ref{rluc}(3))
(cf. Proposition \ref{pwss}(I)), and {\it virtual $t$-truncations}. The Proposition immediately yields the compatibility of all of these notions for $\dom$ and $\dmge$ (with respect to $T$, $\lalb$, and $\rpic$). Also, these comparison functors respect the weight complex functor (see \S3 of ibid. and \S6.3 of \cite{mymot}).

3. An analogue of Proposition \ref{pomfil} (along with Proposition \ref{pcomp}(1)) for Artin-Tate motives over a number field  was established in \S3 of \cite{wildat}.

\end{rema}

\subsection{Graded polarizable mixed Hodge complexes and Hodge modules; the Hodge  realization}\label{shodge}

\begin{pr}\label{phodge}
I Let $X$ be a complex variety.

1. There
 exists a weight structure $w$ on the category $\dbmhmx$ (the derived category of mixed Hodge modules over $X$; see \cite{sai}) that is transversal to the canonical $t$-structure for it. 
 
 2. For this weight structure $\dbmhmx^{w\le 0}$ is the class of complexes of mixed Hodge modules of weight $\le 0$, and $\dbmhmx^{w\ge 0}$ is the class of complexes of mixed Hodge modules of weight $\ge 0$
in the sense of Definition 1.6 of ibid. %\cite{sai}.

II Let $k$ be a subfield of the field of complex numbers.

1. There exists a weight structure $w_{Hodge}$ for the category $\dbhp(k)$ introduced in \S3 of \cite{bah}, that is transversal to the canonical $t$-structure for it (given by Lemma 3.11 of ibid.).

2. The Hodge realization functor $\hsing(k):\dmgm(k)\to\dbhp(k)$ (for example, the composition of the 'polarizable mixed realization' one constructed in \S2.3 of \cite{hu} with the natural functor 'forgetting all other realizations'; cf. \S17.2 of \cite{bvk}) is weight-exact with respect to these weight structures  (i.e. $\hsing (\dmgm(k)^{\wchow\le 0})\subset \dbhp(k)^{w_{Hodge}\ge 0}$ and  $\hsing(\dmgm(k)^{\wchow\ge 0})\subset \dbhp(k)^{w_{Hodge}\le 0}$).

%$\hsing$ strictly respects weights on $\dom$???!! Remark??!!! Ask Luca??!!

\end{pr}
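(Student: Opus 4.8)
The plan is to obtain both existence assertions (I.1 and II.1) as direct instances of Proposition \ref{pdab}, and then to identify the resulting structures by comparing the intrinsic description of the weights furnished by Proposition \ref{pdtr}(I2) (equivalently Remark \ref{rluc}(2)) with the classical definitions of Deligne and Saito. The only genuinely geometric inputs are Saito's structure theory for mixed Hodge modules and the classical purity of the cohomology of smooth projective varieties; everything else is the formalism already developed above.

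For I.1 I would take $\au=\mhmx$, so that by definition $\dbmhmx=D^b(\au)$ and the canonical $t$-structure has heart $\au$. Saito's theory equips $\au$ with a functorial, separated and exhaustive increasing weight filtration $W_{\le i}$ by exact subfunctors of $1_\au$, whose associated graded objects are pure Hodge modules; since polarizable pure Hodge modules of a fixed weight form a semi-simple abelian category, the factors $\au_i$ of Proposition \ref{pdab} are semi-simple. That proposition then produces the transversal weight structure $w$ immediately. For II.1 I would first invoke Beilinson's equivalence $\dbhp(k)\cong D^b(\au)$, where $\au$ is the heart of the canonical $t$-structure described in Lemma 3.11 of \cite{bah}, namely the category of graded polarizable mixed Hodge structures. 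This $\au$ again carries the classical weight filtration, whose factors (polarizable pure Hodge structures of weight $i$) are semi-simple, so Proposition \ref{pdab} applies verbatim and yields $w_{Hodge}$.

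For I.2 I would read off $\dbmhmx^{w\le 0}$ and $\dbmhmx^{w\ge 0}$ from the transversality formalism. By Remark \ref{rluc}(2) an object $Y$ lies in $\dbmhmx^{w\le 0}$ (resp. $\dbmhmx^{w\ge 0}$) exactly when each cohomology $Y^{\tau=i}$ lies in $\au_{\le i}$ (resp. $\au_{\ge i}$), i.e. when the $i$-th cohomology mixed Hodge module has weights $\le i$ (resp. $\ge i$). Here one uses that the semi-orthogonal family $\aui=\cu^{t=0}\cap\cu^{w=i}$ constructed inside the proof of Theorem \ref{transmain} is precisely the category of pure Hodge modules of weight $i$ (this is where the identification of $w$ with Saito's weights is pinned down). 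Comparing with Definition 1.6 of \cite{sai}, and taking into account the reversed sign convention fixed in the Caution, the two descriptions coincide, which is assertion I.2.

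Finally, for II.2 I would prove the (reversed) weight-exactness of $\hsing$ by reduction to the hearts. Since $\wchow$ is bounded and $\hsing$ is triangulated, and since $\cu^{\wchow\le 0}$ and $\cu^{\wchow\ge 0}$ are built from shifts of the heart by extensions (Lemma \ref{lbas}(5) and the proof of Theorem \ref{transmain}), it suffices to check that $\hsing$ sends the heart $\chow$ of $\wchow$ into the heart of $w_{Hodge}$. Every Chow motive is a retract of the motive of a smooth projective $P/k$, and classical Hodge theory shows that $\hsing$ carries this motive to an object whose cohomology objects $Y^{\tau=i}$ are the pure polarizable Hodge structures $H^i(P)$ of weight $i$; by the splitting of the heart into $\bigoplus_i\obj\aui[-i]$ (Remark \ref{rtf}(2)) such an object lies in the heart of $w_{Hodge}$. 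Propagating this through boundedness gives the stated inclusions, the reversal of the inequalities reflecting the cohomological normalization of the realization (its compatibility with shifts carries a sign). The main obstacle throughout is exactly this last bookkeeping — matching three weight conventions (Saito's, Beilinson's, and the reversed one used here) and verifying that $\hsing$ is pure degree by degree on smooth projective motives; once these are settled, Propositions \ref{pdab} and \ref{pdtr} make the rest formal.
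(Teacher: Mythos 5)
Your proposal is correct and follows essentially the same route as the paper: both existence claims come from Proposition \ref{pdab} applied to Saito's weight filtration on $\mhmx$ (resp. Deligne's on graded polarizable mixed Hodge structures, after Beilinson's equivalence $\dbhp(k)\cong D^b(\au)$), part I.2 is the comparison of Remark \ref{rluc}(2) with Saito's Definition 1.6, and part II.2 is the reduction to hearts (as in Proposition \ref{pcomp}) plus purity of the cohomology of smooth projective varieties. The extra details you supply — identifying $\aui$ with the pure objects of weight $i$ and tracking the sign conventions — are exactly what the paper leaves implicit.
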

\begin{proof}
I1. We should verify that  $\mhmx$ satisfies the conditions of Proposition \ref{pdab}. This is immediate from Propositions 1.5 and 1.9 of \cite{sai}. 

2. Immediate from Definition 1.6 of \cite{sai} along with Remark \ref{rluc}(2).

II1. By Lemma 3.11 of \cite{bah}, $\dbhp(k)$ is isomorphic to the bounded derived category of the abelian category of  graded polarizable mixed Hodge structures (over $k$). Similarly to the proof of assertion I1, it remains to apply Proposition \ref{pdab}. %more??!!

2. As in the proof of Proposition \ref{pcomp}, we should verify that $\hsing$ maps the heart of $\wchow(k)$ into that of $w_{Hodge}(k)$. To this end it suffices to note that the $i$-th cohomology of a smooth projective $P/\mathbb{C}$ is a pure (polarizable) Hodge structure of weight $i$ %$-i$??!!!
(for $i\in \z$).
\end{proof}

\begin{rema} 
1. The weight filtration on $\dbhp$ obtained corresponds to the Deligne's definition of weights for mixed Hodge complexes (cf. assertion I2).
%signs of weights??!!

2. For a certain category of {\it Beilinson motives} over $X$ (these are relative Voevodsky's motives
with rational coefficients over $X$; see \cite{degcis}) a certain Chow weight structure
was introduced in  \cite{hebpo} and \cite{brelmot}).
Very probably, this weight structure is   compatible (similarly to assertion II2) with the one introduced in assertion I1 (and so with Saito's weights for complexes
 of mixed Hodge modules); note also that the 'functoriality' properties of the weights of the latter (see Proposition 1.7 of \cite{sai}) are parallel 
to those for $X$-motives (as described in Theorem 3.7 \cite{hebpo} and Theorem 2.2.1 in  \cite{brelmot}).
The problem is that (to the knowledge of the author) no 'mixed Hodge module'  realization of $X$-motives is known to exist at the moment.

3. It does not seem very difficult to extend assertion II to Huber's category $\dmrp$ of mixed realizations (introduced in \S21 of \cite{hubook}), the corresponding cohomology functor (see the Remark after Corollary 2.3.4 of \cite{hu}), and also to its \'etale cohomology analogue.

On the other hand, for a finite characteristic $k$ one cannot prove that the \'etale cohomology of smooth projective varieties is polarizable in absence of the Hodge standard conjecture (for this case). Hence one could only define realizations with values in categories that do not satisfy any polarizability conditions; whereas without these restrictions there will exist non-trivial $1$-extensions inside $\aui$ for a single $i$ (and so, $\aui[-i]$ will not be contained in the heart of any weight structure).

\end{rema}

\subsection{Mixed motives:  the conjectural picture}\label{sconj}

It is widely believed that Voevodsky's $\dmgm$ (over a base scheme $S$) possesses the so-called motivic $t$-structure, whose heart $MM(S)$ (the category of {\it mixed motivic sheaves} over $S$) possesses a weight filtration (cf. Definition \ref{dwfilt} below) whose 'pure' factors $\aui$ are abelian semi-simple (see \S5.10A in \cite{beilh}). Moreover, at least in the case when $S$ is the spectrum of a (perfect) field $k$, people believe that all of $\aui[i]$ consist of Chow motives. This conjectures yield (immediately by %condition (iii) of 
Theorem \ref{transmain}) that the Chow weight structure for $\dmgm(S)$ (that is known to exist unconditionally)
is transversal to the motivic $t$-structure. Thus,  that the results of section 1 could be applied to this situation. We study these questions in detail and obtain some new (conjectural) information on motives in \cite{bmm} this way. Note that 'classically' people were looking only for the mixed motivic $t$-structure and for the 'weight filtration' for $\dmgm$; cf. the introduction and Remark \ref{rwfilc} below. The notion of weight structure is new in this picture; it allows us to construct certain 'weights' for motives unconditionally.

Now we show that the notion of   transversal $t-$ and weight structures indeed yields a way to relate the 'classical' approach to weights with 'our' (i.e. the 'Chow weight structure') one. For  $X\in \dmgm^{t=0}$, a nice % weight 
decomposition
%choice of (\ref{ewd}) %
of $X$
 (see condition (iv) of Theorem \ref{transmain})  yields a distinguished triangle of the type (\ref{ewf}) (see condition (iii') of the Theorem). Now, for any (contravariant) cohomology theory $H:\dmgm\to \au$  the 
 $-1-j$-th level of the 'weight filtration' of $H^i(X)=H(X[-i])$ ($X\in \obj \dmgm,\ i,j\in \z$)  is 'classically' defined as $H^i(X_2)$, where $X_2$ is taken from (\ref{ewf}). Now suppose that
 $H$  factorizes through $MM$ (i.e. it is the composition of the functor $M\mapsto M^{\tau_{MM}=0}$ with a contravariant exact functor; note that conjecturally one could factorize through $MM$ all cohomology theories endowed with 'classical' weights); 
 then $H^i(X)$ and its weight filtration coincides with those for $X'=\tau_{=-i}X$ %(the $-j$-th cohomology of $X$ with respect to $t$ shifted by $[j]$;
 (here we use the fact that the weight filtration functors given by Lemma \ref{lwf}(II) commute with $t$-truncations).
 Hence
 $$H^i(X_2)\cong H^i(X_2')\cong \imm (H^i(w_{\ge j+1-i}X')\to H^i(X'))\cong \imm (H^i(w_{\ge j+1-i}X)\to H^i(X)).$$ Now note that the last two terms do not depend on the choices of the corresponding weight decompositions (see Proposition 2.1.2 of \cite{bws} or Definition \ref{dwfil} below). Hence one can define the weight filtration of $H^j(X)$ unconditionally (using the last term of the formula)!

 One can also make a similar observation using a 'weight' filtration on $\au$ (if it exists); see Remark 2.4.3 of \cite{bws}.
 
Whereas the approach for defining weights for cohomology described (above) is somewhat 'cheating' when it we apply it to  pure motives (since it usually gives no new information); yet it yields interesting results in the 'mixed' case. Note here: for $X=\mg(P)$, where $P$ is smooth projective over $k$ (or over the corresponding base $S$), the motives $X_1,X_2$ should come from a {\it Chow-Kunneth decomposition} of $X$; so their construction is completely out of reach at the moment (in general).

\begin{rema}\label{rwfilc}

One can take the setting of  Lemma \ref{lwf}(II3)
for the definition of a weight filtration for a triangulated category; see also the equivalent
 Definition E17.1 of \cite{bvk} 
(this is a 'triangulated analogue' of Definition \ref{dwfilt} below). 

Now we describe how one could obtain  a weight filtration for $\dmgm$ assuming that the motivic $t$-structure exists. 
If one has transversal $t,w$ for $\cu$, then using the corresponding $\au_i$ one can define the triangulated categories $\cu_i=\lan \au_i\ra$. Next one can use Lemma \ref{lwf}(I) and introduce the corresponding categories  $\cu_{\le i}$. % and the %connecting  projection functors  $b_i:\cu\to \cu_{\le i}$; hence $\cu_{\le i}$ yield a weight filtration  for $\cu$ indeed.

It is easily seen that in the case we consider in this paragraph (i.e. for $\cu=\dmgm$, the  corresponding 'structures' and $\aui$) we should obtain the long-searched-for weight filtration for $\dmgm$ (that was already mentioned in the introduction).
\end{rema}

\section{On weight spectral sequences and weight filtrations for %the heart of 
the heart of \texorpdfstring{$t$}{t}}\label{sdeg}
%\texorpdfstring{${\underline{Ht}}$}{Ht}}\label{sdeg}

%Now 
In this section we study the relations between $w,t$, weight filtrations, and weight spectral sequences. In particular, we study  a condition on $w$ and $t$ that is strictly weaker than their transversality (yet it can be easier to verify). 

In \S\ref{sdwss} we recall weight filtrations and weight spectral sequences (as introduced in \cite{bws}); and relate the degeneration of the latter with 'exactness' of the corresponding filtrations for homology.
In \S\ref{swfilhrt} we study the when a homology theory comes from a $t$-structure; we obtain a certain weight filtration for $\hrt$ in this case.

\subsection{On weight filtrations and (degenerating) weight spectral sequences for (co)homology }\label{sdwss}

%To this end 
First we recall weight  filtrations and weight spectral sequences (as introduced in \cite{bws}).

Let $\au$ be an abelian category. In \S2 of %.3--2.4 of %\cite{bws}
ibid. for $H:\cu\to \au$ that is either cohomological or homological (i.e. it is either covariant or contravariant, and converts distinguished triangles into long exact sequences), certain {\it weight filtrations} and {\it weight spectral sequences}  (corresponding to $w$) were introduced. Below we will be more interested 
the homological functor case; certainly, one can pass to cohomology (that seems to be somewhat more actual when $\cu$ is some category of Voevodsky's motives) by a simple reversion of arrows (cf. \S2.4 of ibid.).

\begin{defi}\label{dwfil}
Let $H:\cu\to \au$ be a homological functor, $i\in \z$.

1. We denote $H\circ [i]:\cu\to \au$ by $H_i$.

2. We choose some $w_{\le i}X$ and define  the {\it weight filtration} for $H$  by $W_iH:X\mapsto \imm (H(w_{\le i}X)\to H(X))$.

Recall that $W_iH$ is functorial in $X$ (in particular, it does not depend on the choice of  $w_{\le i}X$); see Proposition 2.1.2(1) of ibid. (this fact  also easily follows from Lemma \ref{lbas}(2)).
\end{defi}

%$D_1$!!!!
Now we recall some of the properties of weight spectral sequences, and prove some easy (new) results in the case when they degenerate.

\begin{pr}\label{pwss} Let $H:\cu\to \au$ be a homological functor.

I For any $X\in \obj \cu$  there exists a spectral sequence $T=T_w(H,X)$ with $E_1^{pq}(T)=
H_q(X^{p})$ for certain $X^i\in \cu^{w=0}$ (coming from certain weight decompositions as in (\ref{ewd})) that converges to 
$E_{\infty}^{p+q}=H_{p+q}(X)$.
 $T$ is   %covariantly
$\cu$-functorial  in $X$ and in $H$ (with respect to  composition of $H$ with exact functors of abelian categories) starting from $E_2$.  
Besides, the step of filtration given by ($E_{\infty}^{l,m-l}:$ $l\ge k$)
 on $H_{m}(X)$ (for some $k,m\in \z$) equals  $(W_{-k}H_{m})(X)$.
Moreover, $T(H,X)$ comes from an exact couple with $D_1^{pq}=H_{p+q}(w_{\le -p}X)$ (here one can fix any choice of $w_{\le -p}X$). %  $E_1^{pq}=H_q(X^p)$ for some $X^p\in \cu^{w=0}$ (that could be expressed in terms of $w_{\ge p}X$).
 %$T$ converges to $H_{p+q}(X)$ (since $w$ is bounded), and the level of filtration corresponding to $E_1^{pq}$ is given by $W_pH_{p+q}(X)$. $T$ is (canonical and) functorial in $X$ starting from $E_2$; it is also functorial in $H$ (starting from $E_1$ if one fixes the choice of the corresponding weight decompositions).

We will say that $T$ {\it degenerates at $E_2$} (for a fixed $H$) if $T_w(H,X)$ does so for any $X\in \obj \cu$.

II Suppose that $T$ degenerates at $E_2$ (as above), $i\in \z$. Then the following statements are fulfilled.

1. The functors $W_iH$ and $W_i'H:X\mapsto H(X)/W_iH(X)$ are homological.

2. For any $f\in \cu(X,Y)$ the morphism $H(f)$ is strictly compatible with the filtration of $H$ by $W_i$ i.e. $W_{i}H(X)$ surjects onto $W_{i}H(Y)\cap \imm H(f)$.   %for any $i\in \z$?? 

3. Let $\bu$ be an abelian category; $F:\au\to \bu$ be an exact functor. 
Then $T(F\circ H,-)$ degenerates also. 

III Conversely,  if $F$ (as in assertion II3) is conservative and   $T(F\circ H,-)$ degenerates, then $T$ degenerates also.

\end{pr}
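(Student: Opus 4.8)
The plan is to deduce the vanishing of all higher differentials of $T=T_w(H,X)$ from that of $T(F\circ H,X)$, using that an exact conservative functor reflects the vanishing of morphisms. The whole argument rests on comparing the two spectral sequences termwise via the functoriality established in assertion I.

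First I would make precise the relation between $T_w(H,X)$ and $T(F\circ H,X)$. Since $F$ is exact, it commutes with the formation of subquotients and with passing to the homology of a differential; hence applying $F$ termwise to $T_w(H,X)$ again yields a spectral sequence in $\bu$, with pages $F(E_r^{pq})$ and differentials $F(d_r)$. By the functoriality of $T$ in $H$ with respect to composition of $H$ with exact functors of abelian categories (assertion I, valid starting from the $E_2$-page), this spectral sequence is naturally identified with $T(F\circ H,X)$ from $E_2$ onward. Under this identification one obtains, for every $r\ge 2$ and every $X\in\obj\cu$, the equality $d_r(T(F\circ H,X))=F\bl d_r(T_w(H,X))\br$; this is the same mechanism underlying assertion II3.

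Next I would feed in the hypothesis and transport it back across $F$. Degeneration of $T(F\circ H,-)$ at $E_2$ means $d_r(T(F\circ H,X))=0$ for all $r\ge 2$ and all $X$, whence $F\bl d_r(T_w(H,X))\br=0$ for all such $r,X$. Now I use conservativity. For an exact functor $F$ between abelian categories, conservativity forces $F$ to reflect the zero object: if $F(C)=0$, then the canonical morphism $C\to 0$ is sent to an isomorphism, so $C\cong 0$. Consequently, for any morphism $g$ in $\au$ with $F(g)=0$, exactness of $F$ gives $F(\imm g)=\imm F(g)=0$, hence $\imm g=0$ and $g=0$. Applying this with $g=d_r(T_w(H,X))$ shows $d_r(T_w(H,X))=0$ for all $r\ge 2$ and all $X$; that is, $T$ degenerates at $E_2$.

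The step requiring the most care is the first one: one must pin down, via the functoriality of assertion I, the precise sense in which the termwise application of $F$ to $T_w(H,X)$ is identified with $T(F\circ H,X)$, \emph{compatibly with the differentials} $d_r$. That identification is asserted only from the $E_2$-page; however, this is exactly the range of differentials relevant to $E_2$-degeneration, so no information is lost, and the remaining arguments (exactness commuting with images, conservativity reflecting zero) are routine.
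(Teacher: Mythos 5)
Your argument for assertion III is correct, and it is exactly the paper's route: the paper disposes of III with the single remark that conservative exact functors of abelian categories do not kill non-zero morphisms, which is precisely your chain $F(d_r)=0\Rightarrow \imm(d_r)=0\Rightarrow d_r=0$ (using that an exact conservative functor reflects zero objects). Your worry about identifying the termwise application of $F$ to $T_w(H,X)$ with $T_w(F\circ H,X)$ compatibly with the differentials is easily settled: with the same choices of weight decompositions, applying $F$ to the defining exact couple ($D_1^{pq}=H_{p+q}(w_{\le -p}X)$, $E_1^{pq}=H_q(X^p)$) literally yields the exact couple defining $T_w(F\circ H,X)$, since $F$ is exact; so the differentials match already from $E_1$, and the canonical functoriality from $E_2$ onward is only needed to make this independent of choices. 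Your side remark that II3 follows from the same identification also agrees with the paper (where II3 is declared obvious).

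However, the statement to be proved is the whole Proposition, and your proposal addresses only assertion III (plus II3 in passing). Assertion I is a recollection from Theorem 2.3.2 of \cite{bws} (together with Theorem 2.4.2(II) of \cite{bger} for the choice-independence of the functoriality in $H$), so taking it as given is defensible; but assertions II1 and II2 are new content and are nowhere addressed, and they are not formal consequences of what you wrote. II1 (that $W_iH$ and $W_i'H:X\mapsto H(X)/W_iH(X)$ are homological when $T$ degenerates) is the substantive point: the paper proves it by identifying $(W_{-p}H_{p+q})(X)$ with the term $D_2^{pq}$ of the \emph{derived} exact couple --- this identification is exactly where the degeneration hypothesis enters --- and then invoking that this term is a virtual $t$-truncation of $H$, hence homological (Proposition 2.5(II) of \cite{bws}). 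II2 (strict compatibility of $H(f)$ with the weight filtration) is then deduced by completing $f$ to a distinguished triangle $X\stackrel{f}{\to}Y\stackrel{g}{\to}Z$ and computing $W_iH(Y)\cap\imm H(f)=W_iH(Y)\cap\ke H(g)=\ke\bigl(W_iH(Y)\to W_iH(Z)\bigr)=\imm W_iH(X)$, where the last equality uses that $W_iH$ is homological, i.e.\ uses II1. Since none of this appears in your proposal, it is incomplete as a proof of the Proposition, even though the portion you did write is sound.
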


\begin{proof}
I All of the results stated were verified in  Theorem 2.3.2 of ibid. (see formula (13) for a precise description of the corresponding filtration), expect the fact that the functoriality of $T$ with respect to $H$ does not depend on the choice of the corresponding weight decompositions. The latter assertion is immediate from  
Theorem 2.4.2(II) of \cite{bger}. % (whereas in .

II1. The degeneration at $E_2$ yields that $(W_{-p}H_{p+q})X\cong D_2^{pq}T(H,X)$ for any $X\in \obj \cu$ (so, here we consider the derived exact couple). 
Now, we note that the latter is homological (this is the corresponding {\it virtual $t$-truncation} of $H$; see %\S2.5 
Proposition 2.5(II1) of %ibid.).
\cite{bws}). We deduce that $W_i'H$ is homological also by applying part II2 of loc.cit. %(also). 

%In order to study $W_i'H$
%Lastly,
 %we note that $W_i'H(X)\cong \imm (H(X)\to H(w_{\le i-1}X))$ (see (\ref{ewd}). Hence the definition of $W_i'H$ is dual to that of $W_{i+1}H$ (see . self-duality of wss??!!

2. We complete $f$ to a distinguished triangle $X\stackrel{f}{\to} Y \stackrel{g}{\to} Z$. Then $W_{i}H(Y)\cap \imm H(f)=W_{i}H(Y)\cap \ke H(g)=\ke (W_{i}H(Y)\to W_{i}H(Z))$. It remains to note that the last term coincides with $\imm W_{i}H(X)$, since $W_iH$ is homological.

3. Obvious. 

III Easy; note that conservative exact functors of abelian categories do not kill non-zero morphisms.

\end{proof}

\subsection{On the weight filtration for %\protect{\texorpdfstring{$\underline{Ht}$}{hrt}}}
the heart of t} %\texorpdfstring{$t$}{t} 
\label{swfilhrt}

Now we introduce the notion of a weight filtration for an abelian category; (cf. Definition E7.2 of \cite{bvk}).  

\begin{defi}\label{dwfilt}
For an abelian $\au$, we will say that an increasing family of full subcategories $\au_{\le i}\subset \au$, $i\in \z$, yield a {\it weight filtration} for $\aui$ if, $\cap_{i\in \z}\au_{\le i}=\ns$, $\cup_{i\in \z}\au_{\le i}=\au$, and there exist exact right %??!!
adjoints $W_{\le i}$ to the embeddings  $\au_{\le i}\subset \au$.
\end{defi}

\begin{lem}\label{lwfil}
Let $i$ run through all integers.
 
1. Let  $\au_{\le i}$ yield a weight filtration for $\au$. Then they are are exact abelian subcategories of $\au$. Moreover, the adjunctions yield functorial embeddings of $W_{\le i}X\to X$ such that $W_{\le i}X\subset W_{\le i-1}X$ for all $i\in \z$, and
the functors $W_{\ge i}:X\mapsto X/W_{\le i}X$ are exact also. Besides, the
 categories $\aui$ being the 'kernels' of the restriction of $W_{\le i-1}$ to $\au_{\le i}$, are abelian, and $\aui\perp \au_j$ for any $j\neq i$. %??!!

2. Let $W_{\le i}$ be a increasing sequence of subfunctors of $1_{\hrt}$. Then taking $\au_{\le i}$ whose objects are $\{X\in \obj \au: W_{\le i}(X)\cong X\}$ we obtain a weight filtration corresponding to these $W_{\le i}$. %such that these $W_i$ are the corresponding adjoint functors. % (mentioned in the definition of weight filtrations).
Moreover, all $W_{\le i}$ are idempotent functors. %draw a diagram??!!

3. If $W_{\le i}$ yield filtration for $\au$, then the functors $W_{\le -i}^{op}:\au^{op}\to \au^{op}$ yield a weight filtration also.

%4. The morphisms in $\au$ strictly respect the weight filtration i.e. for $f\in \au(X,Y)$, $X,Y\in \obj \au$, and any $i\in \z$, we have: $W_{\ge i}X$ surjects onto $W_{\ge i}Y\cap \imm f$.   
\end{lem}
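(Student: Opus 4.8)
The three assertions repackage the theory of weight filtrations from Appendix E of \cite{bvk}, and my plan is to import the structural backbone of assertion~1 from there while carrying out by hand the verifications that are genuinely computational: idempotency, orthogonality, and the self-duality of Definition~\ref{dwfilt}. The crux throughout is the structural input of assertion~1, so I treat that first.

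For assertion~1, I would record that each $W_{\le i}$ is an exact right adjoint to the full embedding $\au_{\le i}\hookrightarrow\au$, making $\au_{\le i}$ a coreflective subcategory with exact coreflector; the counit supplies the natural transformation $W_{\le i}X\to X$. The monomorphy of this counit and the closure of $\au_{\le i}$ under subobjects and quotients (so that $\au_{\le i}$ is an exact abelian, indeed Serre, subcategory, and the comparison maps $W_{\le i-1}X\to W_{\le i}X$ are functorial monos) is exactly Lemma~E7.5 and Proposition~E7.4(4) of \cite{bvk}, which I would cite. Granting this, exactness of $W_{\ge i}\colon X\mapsto X/W_{\le i}X$ follows from the five lemma applied to the functorial short exact sequence $0\to W_{\le i}X\to X\to W_{\ge i}X\to 0$, and $\aui$ is abelian as an exact subquotient. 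The orthogonality $\aui\perp\au_j$ is the step I would write out. For $j>i$ it is immediate from adjunction: if $X\in\aui\subseteq\au_{\le i}\subseteq\au_{\le j-1}$ and $Y\in\au_j$ then $\au(X,Y)=\au_{\le j-1}(X,W_{\le j-1}Y)=\ns$, since $W_{\le j-1}Y=0$ by definition of $\au_j$. For $j<i$ I argue dually on the source: given $f\colon X\to Y$ with $X\in\aui$, $Y\in\au_j$, the image $\imm f$ is a subobject of $Y\in\au_{\le j}\subseteq\au_{\le i-1}$, hence lies in $\au_{\le i-1}$ by Serre-closure; applying the exact functor $W_{\le i-1}$ to the sequence $\ke f\to X\to\imm f$ and using $W_{\le i-1}X=0$ forces $W_{\le i-1}(\imm f)=0$, while $\imm f\in\au_{\le i-1}$ gives $W_{\le i-1}(\imm f)\cong\imm f$; thus $\imm f=0$ and $f=0$.

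Assertion~2 is where the real work lies, and I would prove it directly from the hypotheses of Proposition~\ref{pdab}: the $W_{\le i}$ form a separated, exhaustive, increasing system of exact subfunctors of $1_{\hrt}$, so each carries an objectwise-monic $\eta_X\colon W_{\le i}X\to X$. The key point is idempotency. Writing $\pi\colon X\to Q:=X/W_{\le i}X$ for the cokernel, naturality of $\eta$ along $\pi$ gives $\pi\circ\eta_X=\eta_Q\circ W_{\le i}(\pi)$; the left-hand side is zero and $\eta_Q$ is monic, so $W_{\le i}(\pi)=0$. Applying the exact functor $W_{\le i}$ to $0\to W_{\le i}X\to X\to Q\to 0$ then shows $W_{\le i}(\eta_X)$ is epic, and it is monic by exactness, hence an isomorphism, i.e. $W_{\le i}$ is idempotent. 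Consequently $W_{\le i}X\in\au_{\le i}=\{X:W_{\le i}X\cong X\}$, and for $Y\in\au_{\le i}$ the assignment $g\mapsto W_{\le i}(g)\circ\eta_Y\ob$ exhibits $W_{\le i}$ as right adjoint to $\au_{\le i}\hookrightarrow\hrt$ (uniqueness from $\eta_X$ monic). The remaining axioms of Definition~\ref{dwfilt} are then read off: the family is increasing because $W_{\le i}X\cong X$ forces $W_{\le i+1}X\cong X$ (as $W_{\le i}X\subseteq W_{\le i+1}X\subseteq X$), and separatedness and exhaustiveness of $\{\au_{\le i}\}$ are inherited verbatim from those of the subfunctors.

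Assertion~3 I would dispatch formally, since Definition~\ref{dwfilt} is self-dual under $(\au,i)\rightsquigarrow(\au^{op},-i)$. The exact quotient functors $W_{\ge\,\cdot}$ produced in assertion~1 are left adjoint to the reflective embeddings of the corresponding $\au_{\ge\,\cdot}\hookrightarrow\au$; passing to $\au^{op}$ turns these into exact right adjoints of the embeddings $\au_{\ge\,\cdot}^{op}\hookrightarrow\au^{op}$, while the reindexing $i\mapsto -i$ converts the decreasing family into an increasing one and interchanges $\cap$ with $\cup$, so the separated and exhaustive conditions are preserved. The functors realizing these right adjoints are the opposites of the $W_{\ge\,\cdot}$, which under the stated indexing are the $W_{\le -i}^{op}$. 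Thus the main obstacle is not located in any single calculation but in the structural input to assertion~1 — extracting, from the bare datum of an exact right adjoint to a full embedding, that the counit is monic and that $\au_{\le i}$ is closed under subobjects and quotients. This is precisely what makes $\aui$ behave like a category of ``pure'' objects; in the write-up I would lean on \cite{bvk} (E7.4, E7.5, E7.8) for it, and it is also the property that powers the $j<i$ half of the orthogonality above.
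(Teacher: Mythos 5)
Your proposal is correct and takes essentially the same approach as the paper: the paper's entire proof consists of citing Proposition E7.4 and Remark E7.8 of \cite{bvk} for assertions 1--2 and declaring assertion 3 immediate by duality, and your argument rests on exactly those same Appendix E results (E7.4(4), E7.5, E7.8), merely writing out the verifications (exactness of $W_{\ge i}$, the two orthogonality cases, idempotency, and the adjunction from the monic counit) that the paper leaves as ``immediate.'' The only cosmetic difference is that the paper deduces assertion 3 from assertion 2 while you deduce it from assertion 1; both dualizations are equally valid.
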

\begin{proof}
Assertions 1,2 are immediate from Proposition E7.4 %(2)
 and Remark E7.8   of \cite{bvk}.

3: immediate from assertion 2.

\end{proof}

Now we fix certain (bounded) $w$ and $t$ for $\cu$, and study a  condition  ensuring that $w$ induces a weight filtration for $\hrt$.

\begin{pr}\label{pdeg}
Let $H=(-)^{\tau=0}$; let $i$ run through all integral numbers.

I Suppose that the corresponding $T$ degenerates. Then the following statements are fulfilled.

1.  The functors $W_iH:\cu\to \hrt$ are homological. %; they kill $\cu^{t\le 0}$ and $\cu^{t\ge 1}$.
 The restrictions $W_{\le i}$ of $W_iH$ to $\hrt$ define a weight filtration for this category. Besides, $W_iH\cong W_{\le i}\circ H$.

2. For $X\in \cu^{t=0}$ %$i\in \z$ %??
 we have: $X\in \obj \au_{ i}$ whenever there exists a bounded complex $C=\dots \to C^{-1}\to C^0\to C^{1}\to\dots$ in $\hw$ %(for all $j\in \z$)
  and $X$ is isomorphic to the $0$-th cohomology of the complex $C^{\tau=i}$, whereas for any $l,j\in \z$ we have: 
 the  $l$-th cohomology of $C^{\tau=j}$ is zero unless $l=0$, $j=i$.

3. For $X\in \cu^{t=0}$, %$i\in \z$ %??
 we have: $X\in \obj \au_{\le i}$ whenever there exist a $Y\in \cu^{w\le i}$ and an epimorphism $H(Y)\to X$.

%2. Let $t$ be transversal to $w$. Then $T$ degenerates.

II Suppose that $t$ is transversal to $w$. %$\tau_{=i}$ preserves $\hw$ for any $i\in \z$. 
Then $T$ degenerates. The corresponding $\aui$,  $\au_{\ge i}$, $W_{\ge i}$, and $W_{\le i}$ are the same as those described in \S\ref{sdtrans}. %, whereas $W_i'=W_{\le i-1}$.

III  Let $\bu$ be an abelian category; let $F:\hrt\to \bu$ be an exact functor.

1. Suppose that  $T$ degenerates. Then   $T_w(F\circ H,-)$ also does.

2. Conversely,  suppose that $F$ is conservative and $T_w(F\circ H,-)$ degenerates.  Then $T$ degenerates. 

Moreover, for $X\in \cu^{t=0}$ we have: $W_{\le i}X=X$ (resp. $W_{\le i}X=0$) whenever $W_i(F\circ H)(X)=F(X)$ (resp. $W_i(F\circ H)(X)=0$).

\end{pr}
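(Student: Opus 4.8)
The functor $H=(-)^{\tau=0}$ is homological, so Proposition \ref{pwss} applies to $T=T_w(H,-)$ with $\au=\hrt$, and I would organize the proof around its three assertions. \textbf{Part I.} Assuming $T$ degenerates, Proposition \ref{pwss}(II1) shows that each $W_iH\colon\cu\to\hrt$ is homological (it is the corresponding virtual $t$-truncation of $H$); the same identification yields $W_iH\cong W_{\le i}\circ H$, where $W_{\le i}$ denotes the restriction of $W_iH$ to $\hrt$. The step needing care is that these $W_{\le i}$ form an increasing family of \emph{exact} subfunctors of $1_{\hrt}$, which I would extract from the homological-ness of $W_iH$ together with the strictness statement of Proposition \ref{pwss}(II2); Lemma \ref{lwfil}(2) then yields the weight filtration of $\hrt$ (Definition \ref{dwfilt}) and the subcategories $\au_{\le i}$, proving I.1. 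For I.3, given $X\in\au_{\le i}$ I would take $Y=w_{\le i}X\in\cu^{w\le i}$, so that $\imm(H(Y)\to X)=W_iH(X)=W_{\le i}X=X$ exhibits $H(Y)\to X$ as an epimorphism; conversely, any epi $H(Y)\to X$ with $Y\in\cu^{w\le i}$ presents $X$ as a quotient of $H(Y)=W_iH(Y)\in\au_{\le i}$, and $\au_{\le i}$ is closed under quotients by Lemma \ref{lwfil}(1). For I.2 I would represent $X$ by a bounded weight complex $C\in K^b(\hw)$ via the weight complex functor (Remark \ref{rfun}(1)) and read off the purity $X\in\aui$ from the degenerate $E_1$-page $H_q(X^p)$, translating it into the stated vanishing of the cohomology of the complexes $C^{\tau=j}$.

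\textbf{Part II.} If $t$ is transversal to $w$, then by condition (iv) of Theorem \ref{transmain} every $X\in\cu^{t=0}$ admits nice decompositions with $w_{\le i}X\in\cu^{t=0}$; these split the successive weight truncations of $X$ into short exact sequences in $\hrt$, forcing all higher differentials of $T_w(H,X)$ to vanish, so $T$ degenerates. The resulting $\au_{\le i}$, $\aui$, $W_{\le i}$, $W_{\ge i}$ coincide with those of \S\ref{sdtrans} because both are computed from the same nice decompositions, i.e.\ from the objects $w_{\le i}X$ lying in $\cu^{t=0}$ (cf.\ Proposition \ref{pdtr}).

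\textbf{Part III.} Here III.1 is immediate from Proposition \ref{pwss}(II3), and the degeneration asserted in III.2 from Proposition \ref{pwss}(III), both taken with $\au=\hrt$. The only new computation is the ``Moreover'' clause. For $X\in\cu^{t=0}$ one has $H(X)=X$, so by Definition \ref{dwfil} and the exactness of $F$ (which commutes with images)
\[
W_i(F\circ H)(X)=\imm\bigl(F(H(w_{\le i}X))\to F(X)\bigr)=F\,\imm\bigl(H(w_{\le i}X)\to X\bigr)=F(W_{\le i}X)
\]
as subobjects of $F(X)$. Applying the exact $F$ to the short exact sequence $0\to W_{\le i}X\to X\to W_{\ge i+1}X\to 0$ from Part I.1, the equality $W_i(F\circ H)(X)=F(X)$ holds precisely when $F(W_{\ge i+1}X)=0$, and $W_i(F\circ H)(X)=0$ precisely when $F(W_{\le i}X)=0$. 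Since a conservative exact functor kills no nonzero object (as in the proof of Proposition \ref{pwss}(III)), these translate into $W_{\ge i+1}X=0$, i.e.\ $W_{\le i}X=X$, and into $W_{\le i}X=0$ respectively, which is the claim.

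The main obstacle I anticipate is Part I.1: verifying that the restriction of the homological functor $W_iH$ to $\hrt$ is genuinely an exact subfunctor of the identity, since this is what legitimizes Lemma \ref{lwfil} and underpins the entire weight-filtration formalism. By contrast, the degeneration transfers in Part III and the ``Moreover'' identity are formal once the exactness and conservativity of $F$ are invoked.
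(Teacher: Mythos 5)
Your Parts I and III essentially follow the paper's route (I1 via Proposition \ref{pwss}(II1) and Lemma \ref{lwfil}(2); I3 by taking $Y=w_{\le i}X$ and using closure of $\au_{\le i}$ under quotients; III via Proposition \ref{pwss}(II3) and (III) plus the observation that a conservative exact functor kills no nonzero object). The genuine gap is in Part II. By the paper's convention (Proposition \ref{pwss}(I)), ``$T$ degenerates'' means that $T_w(H,X)$ degenerates at $E_2$ for \emph{every} $X\in\obj\cu$, whereas your argument via nice decompositions only addresses $X\in\cu^{t=0}$ (and even there the assertion that the short exact sequences ``force all higher differentials to vanish'' is stated rather than proved, although for $X\in\cu^{t=0}$ it can be justified: nice choices make the exact-couple terms $D_1^{pq}=H_{p+q}(w_{\le -p}X)$ vanish off a single anti-diagonal). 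There is no formal passage from $\cu^{t=0}$ to all of $\cu$: degeneration of weight spectral sequences is not inherited by extensions (a distinguished triangle $A\to B\to C$ does not produce an exact sequence of spectral sequences), so boundedness of $t$ does not finish the job. The paper's proof uses a global mechanism that your argument never touches: under transversality $\cu^{w=0}=\bigoplus_{j}\obj\au_j[-j]$ (Remark \ref{rtf}(2)), so every $E_1$-term $H_q(X^p)$ of $T_w(H,X)$ is \emph{pure}, i.e. lies in a single subcategory $\au_{?}$, with the index shifting under the differentials $d_r$ for $r\ge 2$; strong semi-orthogonality ($\aui\perp\au_j$ for $i\neq j$) then forces these differentials to vanish for \emph{arbitrary} $X$. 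This purity-plus-orthogonality step is the missing idea; once degeneration is known for all objects, your identification of $\aui$, $\au_{\le i}$, $W_{\le i}$, $W_{\ge i}$ with those of \S\ref{sdtrans} via nice decompositions is fine.

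A secondary point: in I2 you invoke the strong weight complex functor through Remark \ref{rfun}(1), but that construction requires a differential graded enhancement of $\cu$, which is not among the hypotheses of the proposition. The complex $C$ in the statement is simply the complex formed by the objects $X^p\in\cu^{w=0}$ underlying the $E_1$-page of $T_w(H,X)$ (the ``weak'' weight complex of \cite{bws}), which exists for any $(\cu,w)$; the paper reads off both implications of I2 directly from the $D_2$- and $E_2$-terms of the degenerate spectral sequence, while your sketch imports an extra assumption and treats only one direction of the ``whenever''.
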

\begin{proof}

I1. Immediate from Proposition \ref{pwss}(II1) and Lemma \ref{lwfil}(2).

%???!!!

2. If $X\in \aui$, then (by definition of $\aui$) for the corresponding weight spectral sequences we obtain: $W_{\le i}X=X$, $W_{\le i-1}X=0$. %As was shown %just above,
%in the proof of Proposition \ref{pwss}(II1), 
This translates into (see the proof of Proposition \ref{pwss}(II1)): $D_2^{-i,i}X\cong X$ and $D_2^{1-i,i-1}X=0$. Hence $X\cong E_2^{-i,i}$, whereas all the remaining $E_2^{pq}$ are zero (note that $E_2^{p+q}(T)=0$ for any $X\in \cu^{t=0}$, $p+q\neq 0$). Therefore, $X$ is the $-i$-th cohomology of the complex $(C^j)=(E_1^{j,i})=(H_{i}(X^{j}))$, whereas all the other cohomology of this complex is zero, as well as the all of the cohomology of the complexes
$(X^{j})^{\tau=l}$ for all (fixed) $l\neq 0$.

Conversely, for any $C^0\in \cu^{w=0}$ we obviously have $\tau^{=i}C^0\in \aui$; hence this is also true for any subfactor of $H(C^0[i])$. Here we only use the fact that  $\aui$ is an exact subcategory of $\au$; no other restrictions on the corresponding complex $C$ are necessary.

3. By definition, $X\in \au_{\le i}$ whenever $W_iH(X)=X$. Hence for $X\in \au_{\le i}$ we can take $Y=w_{\le i}X$.

Conversely, let there exist an epimorphism $H(Y)\to X$ for $Y\in \cu^{w\le i}$.  Since $\au_{\le i}$ is an exact subcategory of $\au$, we may assume that $X=H(Y)$. Hence $W_iH(Y)=H(Y)$. Now,  assertion I1 yields that
$W_iH(Y)=W_{\le i}(H(Y))$; the result follows immediately.

%???!!!

II By the previous assertion, we have $E_1^{pq}T\in \au_{-p-q}$ %q,-q,-p??!
for any $p,q\in \z$; hence the same is true for $E_r^{pq}$ for any $r\ge 1$. Hence the %? 
boundary morphisms of $E_i(T)$ for $i\ge 2$ vanish, since their sources and targets necessarily belong to distinct $\aui$, and we obtain that $T$ degenerates at $E_2$ indeed.
 
Now, assertions I2 and I3 easily yield that $\aui$ and $\au_{\le i}$ for the corresponding weight filtration of $\hrt$ are the same as described in \S\ref{sdtrans}
 (for transversal $w$ and $t$). Hence the two versions of $W_{\le i}$ and $W_{\ge i}$ coincide also.

III Assertion 1 is  just a partial cases of Proposition \ref{pwss}(II3). 
Besides, the first part of assertion 2  is also a partial case of part III of ibid.

In order to verify the second part of assertion III2 it suffices to note that exact functors respect weight filtrations for homology, %obvious??! {pwss}??!
whereas conservative exact functors cannot kill non-zero levels of these filtrations and non-zero $\hrt$-morphisms.

\end{proof}

\begin{rema}\label{rngen} %strict compatibility of the weight filtration with morphisms??
%In particular, assertion 2 yields that $T$ degenerates if $t$ is transversal to $w$. Moreover, 
1. $T$ also degenerates for the example described in Remark \ref{rluc}(1) (i.e. $\cu=D^b(\au)\cong K^b(Proj \au)$). Hence, the degeneration of $T_w((-)^{\tau=0},-)$ is strictly weaker than the transversality of $t$ with $w$.

%If $\hw\subset \hrt$ $t$ respects $w$??!!

Unfortunately, the author does not know of any 'description' of $w$ and $t$ in terms of 'generators' (as in Remark \ref{rtf}(2)) 
%(cf. the description of $t$ and $w$ in terms of $\aui$ in )  
in this more general situation. %{ewf}

It would also be interesting to understand whether the degeneration of $T$ implies that $\tau_{=i}$ preserves $\hw$ (as in the example mentioned), and whether the converse implication is valid. 

2. The author does not have a lot of examples of this situation (with $w$ not transversal to $t$). The  advantage of this weaker condition is that it could be checked 'at $t$-exact conservative realizations' of $\cu$. In particular, conjecturally it is sufficient to verify the degeneration of the (Chow)-weight spectral sequences for the ('perverse') \'etale (co)homology of (Voevodsky's) motives (instead of the 'mixed motivic homology' that corresponds to the conjectural motivic $t$-structure); cf. \S3.3 of \cite{bmm}.

\end{rema}
%\begin{verbatim} 
 \begin{thebibliography}{1}


\bibitem[B-VK10]{bvk} L. Barbieri-Viale, B. Kahn, On the derived
category
of $1$-motives, preprint %Appendix E,  
\url{http://arxiv.org/abs/1009.1900}


 \bibitem[BBD82]{bbd} Beilinson A., Bernstein J., Deligne P., Faisceaux pervers// %{print}
Ast\'erisque 100, 1982, 5--171.

\bibitem[Bei86]{bah} Beilinson A. Notes on absolute Hodge cohomology, %in: Applications of algebraic K-theory to algebraic geometry and number theory, Part I, II (Boulder, Colo., 1983), 
Contemporary Mathematics, 55,  35--68, American Mathematical Society, 1986.



\bibitem[Bei87]{beilh} Beilinson A., Height pairing between algebraic cycles, in:
 K-theory, Arithmetic and Geometry, Lecture Notes in Mathematics 1289, 1--26, Springer,
 1987.

\bibitem[Bon10a]{bws} Bondarko M.,
Weight structures vs.  $t$-structures; weight filtrations,
 spectral sequences, and complexes (for motives and in general)//
 J. of K-theory, v. 6, i.	03, 2010, %pp. 
 387--504, 
%to appear in J. of K-theory
see also \url{http://arxiv.org/abs/0704.4003}



\bibitem[Bon09a]{mymot} Bondarko M.V.,
{Differential graded motives:
 weight complex, weight filtrations and spectral sequences for
 realizations; Voevodsky vs. Hanamura}// J. of the Inst. of Math. of Jussieu,
 %\href{http://journals.cambridge.org/action/displayAbstract?aid=2460176} {doi:10.1017/S147474800800011X},
v.8 (2009), no. 1, 39--97,
see also
\url{http://arxiv.org/abs/math.AG/0601713}

\bibitem[Bon09s]{bsurv}
 Bondarko M.V., Weight structures and motives; comotives,
 coniveau and Chow-weight spectral sequences, and mixed complexes of sheaves: a survey, preprint,
\url{http://arxiv.org/abs/0903.0091}


\bibitem[Bon10b]{brelmot} Bondarko M., Weights for Voevodsky's motives over a base; relation with mixed complexes of sheaves, preprint, \url{http://arxiv.org/abs/1007.4543}

\bibitem[Bon10c]{bger} Bondarko M.V., Motivically functorial
coniveau spectral sequences;
 direct summands of 
 cohomology of function fields, 
 Doc. Math., extra volume: Andrei Suslin's Sixtieth Birthday (2010), 33--117.

 \bibitem[Bon10d]{bzp} Bondarko M.,  $\zop$-motivic resolution of singularities, and applications, preprint, \url{http://arxiv.org/abs/1002.2651}


\bibitem[Bon11a]{bmm} Bondarko M.V., Mixed motivic sheaves (and weights for them) exist if 'ordinary' mixed motives do, preprint, \url{http://arxiv.org/abs/1105.0420}

\bibitem[Bon11b]{bzpc} Bondarko M.V., $\mathbb{Z}[\frac{1}{p}]$-motivic resolution of singularities, Compositio Mathematica, Available on CJO 2011, doi:10.1112/S0010437X11005410. 


\bibitem[CiD09]{degcis} Cisinski D., Deglise F., Triangulated categories of mixed motives, preprint,
\url{http://arxiv.org/abs/0912.2110}


\bibitem[Heb10]{hebpo} H\'ebert D., Structures de poids à la Bondarko sur les motifs de Beilinson, preprint, \url{http://arxiv4.library.cornell.edu/abs/1007.0219}

\bibitem[Hub95]{hubook} Huber A., Mixed Motives And Their Realization In Derived Categories,
Springer, LNM 1604,  1995.

\bibitem[Hub00]{hu} Huber A.
Realizations of  Voevodsky's motives// J. Alg. Geometry,  9, 2000,
no. 4,  755--799.


\bibitem[Pau08]{konk} Pauksztello D., Compact cochain objects in triangulated categories and
co-t-structures//     Central European Journal of Mathematics, vol.
6, n. 1, 2008, 25--42.


\bibitem[Sai89]{sai} Saito M.  Introduction to mixed Hodge modules (Actes du Colloque de Th\'eorie de Hodge, Luminy, 1987)// Ast\'erisque No. 179--180 (1989), 10, 145--162.

\bibitem[Voe00]{1} Voevodsky V. Triangulated category of motives, in:
Voevodsky V.,  Suslin A., and  Friedlander E.
Cycles, transfers and motivic homology theories, Annals of
Mathematical studies, vol. 143, Princeton University Press,
2000,  188--238.

 \bibitem[Wil09]{wildat} Wildeshaus J., Notes on Artin-Tate motives, preprint,
\url{http://www.math.uiuc.edu/K-theory/0918/}


\end{thebibliography}

%\end{verbatim} 
\end{document}